\begin{document}

\newtheorem{theorem}{Theorem}[section]
\newtheorem{result}[theorem]{Result}
\newtheorem{fact}[theorem]{Fact}
\newtheorem{example}[theorem]{Example}
\newtheorem{conjecture}[theorem]{Conjecture}
\newtheorem{lemma}[theorem]{Lemma}
\newtheorem{proposition}[theorem]{Proposition}
\newtheorem{corollary}[theorem]{Corollary}
\newtheorem{facts}[theorem]{Facts}
\newtheorem{props}[theorem]{Properties}
\newtheorem*{thmA}{Theorem A}
\newtheorem{ex}[theorem]{Example}
\theoremstyle{definition}
\newtheorem{definition}[theorem]{Definition}
\newtheorem{remark}[theorem]{Remark}
\newtheorem*{defna}{Definition}

\newcommand{\notes} {\noindent \textbf{Notes.  }}
\newcommand{\note} {\noindent \textbf{Note.  }}
\newcommand{\defn} {\noindent \textbf{Definition.  }}
\newcommand{\defns} {\noindent \textbf{Definitions.  }}
\newcommand{\x}{{\bf x}}
\newcommand{\z}{{\bf z}}
\newcommand{\B}{{\bf b}}
\newcommand{\V}{{\bf v}}
\newcommand{\T}{\mathbb{T}}
\newcommand{\Z}{\mathbb{Z}}
\newcommand{\Hp}{\mathbb{H}}
\newcommand{\D}{\mathbb{D}}
\newcommand{\R}{\mathbb{R}}
\newcommand{\N}{\mathbb{N}}
\renewcommand{\B}{\mathbb{B}}
\newcommand{\C}{\mathbb{C}}
\newcommand{\ft}{\widetilde{f}}
\newcommand{\dt}{{\mathrm{det }\;}}
 \newcommand{\adj}{{\mathrm{adj}\;}}
 \newcommand{\0}{{\bf O}}
 \newcommand{\av}{\arrowvert}
 \newcommand{\zbar}{\overline{z}}
 \newcommand{\xbar}{\overline{X}}
 \newcommand{\htt}{\widetilde{h}}
\newcommand{\ty}{\mathcal{T}}
\renewcommand\Re{\operatorname{Re}}
\renewcommand\Im{\operatorname{Im}}
\newcommand{\tr}{\operatorname{Tr}}

\newcommand{\ds}{\displaystyle}
\numberwithin{equation}{section}

\renewcommand{\theenumi}{(\roman{enumi})}
\renewcommand{\labelenumi}{\theenumi}

\title{Fixed curves near fixed points}

\author{Alastair Fletcher}
\address{Department of Mathematical Sciences, Northern Illinois University, DeKalb, IL 60115-2888. USA}
\email{fletcher@math.niu.edu}

\maketitle

\begin{abstract}
Let $H$ be a composition of an $\R$-linear planar mapping and $z\mapsto z^n$. We classify the dynamics of $H$ in terms of the parameters of the $\R$-linear mapping and the degree by associating a certain finite Blaschke product. We apply this classification to this situation where
$z_0$ is a fixed point of a planar quasiregular mapping with constant complex dilatation in a neighbourhood of $z_0$. In particular we find how many curves there are that are fixed by $f$ and that land at $z_0$.
\end{abstract}

\section{Introduction}

\subsection{Background}

Complex dynamics has been a field of intense study over the last thirty years. The striking computer generated images of the Mandelbrot set helped inspire this surge of activity and showed how very complicated behaviour can arise from very simply defined iterative systems. Yet complex dynamics had its first burst of interest at the end of the nineteenth and into the beginning of the twentieth centuries. Koenigs and B\"ottcher classified the behaviour of holomorphic functions near fixed points by conjugating to simpler functions. In a neighbourhood of a fixed point, a holomorphic function can be conjugated to either $z\mapsto \lambda z$ or $z\mapsto z^n$ depending on whether or not the holomorphic function is injective in a neighbourhood of the fixed point. See Milnor's book \cite{Milnor} for an exposition of these ideas.

Quasiconformal mappings and quasiregular mappings provide natural higher dimensional analogues for holomorphic functions in the plane. Informally speaking, quasiregular mappings are mappings which allow a bounded amount of distortion. They share many value distributional properties with holomorphic functions, for example versions of Picard's and Montel's Theorems hold, but they are more flexible than holomorphic functions. The only holomorphic functions in $\R^n$ for $n\geq 3$ are M\"obius transformations, and so it is natural to allow distortion to have an interesting function theory. See Rickman's book \cite{Rickman} for an introduction to the theory of quasiregular mappings.

Much more is known about quasiregular mappings in the planar setting: every quasiregular mapping has a {\it Stoilow decomposition}, that is, it can be written as a composition of a holomorphic function and a quasiconformal mapping. The holomorphic part takes care of the branching and the quasiconformal part takes care of the distortion. The main reason that more is known in the plane is due to the Measurable Riemann Mapping Theorem, which states that solutions of the Beltrami differential equation $f_{\overline{z}} = \mu f_z$ for $\mu \in L^{\infty}(\C)$ with $||\mu ||_{\infty} <1$ are quasiconformal mappings which can be assumed to fix $0,1$ and $\infty$. Conversely, given a quasiconformal mapping $f:\C \to \C$, its {\it complex dilatation} $\mu_f = f_{\overline{z}}/f_z$ is contained in the unit ball of $L^{\infty}(\C)$.

The Measurable Riemann Mapping Theorem was used by Douady and Hubbard, and Sullivan to great effect in the 1980s in proving fundamental results in complex dynamics. See Branner and Fagella's book \cite{BF} for a survey of the use of quasiconformal and quasiregular methods in modern complex dynamics.

More recently, there has been an interest in studying the iteration of quasiregular mappings themselves.
A composition of quasiregular mappings is again quasiregular.
However, the distortion of the iterates of a quasiregular mapping will typically increase. This means the machinery available from Montel's Theorem is unavailable to help with the iterative theory. Remarkably, it is still possible to say quite a lot about the dynamics of quasiregular mappings when there is not a uniform bound on the distortion of the iterates. See for example the recent works of Bergweiler \cite{B1,B2}.

\subsection{Overview of the paper}

In section 2, we will recall the definition of quasiconformal and quasiregular mappings, with focus on the planar case.
Taking as an inspiration the work of Douady and Hubbard in studying the simplest non-trivial polynomials $z^2+c$, an initial step for studying quasiregular mappings in the plane is to study those which are the simplest: using the Stoilow decompostion, these are mappings which can be written as a composition of a power of $z$ and a quasiconformal mapping of constant complex dilatation. These were first studied in \cite{FG} and in more detail for the degree two case in \cite{FF}. We remark that similar quasiregular perturbations of polynomials were studied in \cite{BSTV,BP,BvN,Peckham,PM}. 

By construction, every such mapping maps rays emanating from $0$ onto rays and so every such mapping induces a degree $n$ circle endomorphism. The major insight here is that such a circle map is strongly related to a particular Blaschke product and the dynamics of the Blaschke product has strong implications for the dynamics of the original quasiregular mapping.

In particular, every fixed point of the Blaschke product corresponds to either a fixed ray of the quasiregular mapping or a pair of opposite rays which switch. Here, the cases of even or odd $n$ are different. Further, the Julia set of a Blaschke product is either the whole circle or a Cantor subset of it. We classify the type of the quasiregular mapping in terms of the complex dilatation by relating it to the parameter space of degree $n$ unicritical Blaschke products.

We next show that for such quasiregular mappings, the plane breaks into three dynamically interesting sets: the basins of attraction of $0$ and $\infty$ respectively and the boundary between them. We show that as long as the distortion is smaller than the degree, the boundary is the Julia set of the quasiregular mapping. Otherwise there are cases when the boundary of basins of attraction fails to have the necessary blowing-up property required for the Julia set.

An important point in the analysis of these mappings is that they are not uniformly quasiregular. For otherwise they would just be quasiconformal conjugates of holomorphic mappings and this study would not be of independent interest. We will show that in fact the distortion of the iterates blows up at every point for such mappings.

Finally, we show how to construct a B\"ottcher type coordinate for a fixed point of a quasiregular mapping for which the complex dilatation is constant in a neighbourhood of the fixed point. This allows the results from the rest of the paper to be applied locally.
See \cite{FF1} for the degree $2$ case of such a B\"ottcher coordinate.\\

The author would like to thank Doug Macclure for producing Figures \ref{fig:3} and \ref{fig:4} used in this paper.

\section{Quasiregular mappings with constant complex dilatation}

\subsection{Quasiregular mappings}

We first recall the definitions of quasiconformal and quasiregular mappings in the plane.

A \emph{quasiconformal} mapping $f:\C \to \C$ is a homeomorphism so that $f$ is in the Sobolev space $W^1_{2, loc}(\C)$ and there exists $k\in[0,1)$ such that
the \emph{complex dilatation} $\mu_f = f_{\zbar}/ f_z$ satisfies
\[ | \mu_f(z) | \leq k <1\]
almost everywhere in $\C$. See for example \cite{FM} for more details on quasiconformal mappings.
The \emph{distortion} of $f$ at $z\in\C$ is
\[K_f(z):=\frac{1+|\mu_f(z)|}{1-|\mu_f(z)|}.\]
A mapping is called $K$-quasiconformal if $K_f(z) \leq K$ almost everywhere. The smallest such constant is called the \emph{maximal dilatation} and denoted by $K_f$. The case $K_f = 1$ corresponds to biholomorphic mappings and so $K_f$ is a way of measuring how far from a conformal mapping $f$ is.

If we drop the assumption on injectivity, then $f$ is a \emph{quasiregular mapping}. See for example \cite{IM,Rickman} for the theory of quasiregular mappings. Every quasiregular mapping is locally quasiconformal away from the branch set, which in the planar case is discrete. We can therefore consider the complex dilatation of a quasiregular mapping.
In the plane, every quasiregular mapping has an important factorization.

\begin{theorem}[Stoilow factorization, see for example \cite{IM} p.254]
\label{Stoilow}
Let $f:\C \rightarrow \C$ be a quasiregular mapping. Then there exists an holomorphic function $g$ and a quasiconformal mapping $h$ such that $f = g \circ h$.
\end{theorem}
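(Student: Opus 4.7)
The plan is to extract the measurable complex dilatation $\mu_f$ of $f$ and absorb it entirely into a quasiconformal factor, leaving a residual mapping whose dilatation is zero. First, observe that although $f$ is only quasiregular (possibly with a discrete branch set), the dilatation $\mu_f = f_{\bar z}/f_z$ is well-defined almost everywhere on $\C$ and satisfies $\|\mu_f\|_\infty \leq k < 1$. By the Measurable Riemann Mapping Theorem, there exists a quasiconformal homeomorphism $h:\C\to\C$ (normalized, say, to fix $0,1,\infty$) whose complex dilatation satisfies $\mu_h = \mu_f$ almost everywhere.

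Next I would set $g := f \circ h^{-1}$, so that tautologically $f = g \circ h$. The composition of a quasiregular map with a quasiconformal map is quasiregular, so $g$ is quasiregular and in particular lies in $W^{1,2}_{\mathrm{loc}}$. The key computation is the transformation rule for complex dilatation under composition: for $w = h(z)$,
\[
\mu_f(z) \;=\; \frac{\mu_h(z) + \mu_g(h(z))\,\overline{h_z(z)}/h_z(z)}{1+\overline{\mu_h(z)}\,\mu_g(h(z))\,\overline{h_z(z)}/h_z(z)}.
\]
Since $\mu_h = \mu_f$ almost everywhere, solving this identity forces $\mu_g(w) = 0$ for almost every $w$. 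Thus $g$ is a $W^{1,2}_{\mathrm{loc}}$ map satisfying the Cauchy--Riemann equations weakly almost everywhere, and by Weyl's lemma (or equivalently, the standard regularity result that a quasiregular map with vanishing dilatation is holomorphic) we conclude that $g$ is holomorphic on $\C$.

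The main obstacle is the justification of the chain-rule formula for $\mu_f$ at points where things can go wrong: namely, the critical set of $h$ (empty, since $h$ is a homeomorphism with $h_z \neq 0$ a.e.) and more importantly the branch set $B_f$ of $f$, where $f_z$ and $f_{\bar z}$ need careful handling. In the planar case $B_f$ is discrete, hence has measure zero, so it can be ignored for the purpose of an almost-everywhere identity; one needs to verify that both sides of the chain rule are well defined outside a null set and that the $W^{1,2}_{\mathrm{loc}}$ composition formula applies, which is standard for quasiconformal $h$. Once these technicalities are in place, the argument reduces to the two off-the-shelf tools above: MRMT to produce $h$, and Weyl's lemma to promote the a.e. Cauchy--Riemann equations for $g$ to genuine holomorphy.
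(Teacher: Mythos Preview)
Your argument is correct and is in fact the standard proof of Stoilow factorization in the plane. Note, however, that the paper does not supply its own proof of this theorem: it is stated as a known result with a reference to \cite{IM}, p.~254, and is used only as background. So there is no ``paper's proof'' to compare against; your write-up is exactly the classical argument (MRMT to absorb the dilatation into a quasiconformal factor, then Weyl's lemma to upgrade the zero-dilatation residual to genuine holomorphy), and the technical caveats you flag about the branch set and the chain rule are handled just as you describe.
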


In this decomposition, the holomorphic part takes care of the branching and the quasiconformal part deals with the distortion.

We call a quasiregular mapping $f$ \emph{uniformly quasiregular} if there exists $K\geq1$ such that
$K_{f^n}(z)\leq K$ for all $n\in\N$.
The dynamics of uniformly quasiregular mappings in the plane are well understood due to results of Hinkkanen \cite{Hinkkanen} and Sullivan \cite{Sullivan} that state that every uniformly quasiregular map $f:\C\to\C$ is quasiconformally conjugate to a holomorphic map.

\subsection{$\R$-linear mappings}

Let $K>1$ and $\theta \in (-\pi/2,\pi/2]$. Denote by $h=h_{K,\theta}$ the $\R$-linear mapping
\begin{equation} 
\label{eq:h}
h_{K,\theta}(z) = \left( \frac{K+1}{2} \right ) z +e^{2i\theta} \left ( \frac{K-1}{2} \right ) \overline{z}.
\end{equation}
This mapping stretches by a factor $K$ in the direction $e^{i\theta}$. This is a quasiconformal mapping and its complex dilatation is the constant
\[ \mu_h(z) \equiv e^{2i\theta}\left ( \frac{K-1}{K+1} \right ).\]
Every quasiconformal mapping $h:\C \to \C$ with constant complex dilatation can be written as $h=A\circ h_{K,\theta}$ for some $K,\theta$ and where $A$ is a M\"obius transformation (see \cite[Proposition 1.1]{FF}).

\subsection{Quasiregular mappings with constant complex dilatation}

Let $n\in \N$ with $n\geq 2$ and define $H = H_{K,\theta,n}$ by
\begin{equation}\label{eq:H}
H(z) = [h_{K,\theta}(z)]^n.
\end{equation}
This is the mapping whose dynamics will be studied in this paper. We refer to \cite{FF} for an analysis of the $n=2$ case. Here, however, we will give a unified treatment for all $n\geq 2$ and classify the behaviour of mappings given by \eqref{eq:H} into  classes which depend on $K,\theta$ and $n$. 

A ray is a semi-infinite line of the form $R_{\phi} = \{z\in \C : \arg(z) = \phi \}$. Since $h$ and $z^n$ map rays to rays, so does $H$. Since $H$ maps rays to rays, it induces a degree $n$ circle endomorphism that we will denote by $\widetilde{H}$.

\begin{proposition}
\label{prop:hform}
Given $H$ as in \eqref{eq:H}, we may write
\[ H(re^{i\phi}) = A(r,\phi)\widetilde{H}(e^{i\phi}),\]
where $A(r,\phi) = r^n(1+ (K^2-1)\cos^2(\phi - \theta) )^{n/2}$ and
\begin{equation} 
\label{eq:polarH} 
\tan \left ( \frac{\arg \widetilde{H}(e^{i\phi})}{n} - \theta \right ) = \frac{ \tan(\phi - \theta)}{K}.
\end{equation}
\end{proposition}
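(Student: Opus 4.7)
The plan is to compute $h_{K,\theta}(re^{i\phi})$ directly in polar form, and then use the identity $H=h^n$ to read off the modulus (giving $A(r,\phi)$) and the argument (which, since $H$ sends rays to rays, equals $\arg\widetilde{H}(e^{i\phi})$).

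First I would substitute $z=re^{i\phi}$ into \eqref{eq:h}, use $\overline{re^{i\phi}}=re^{-i\phi}$, and pull out the common factor $re^{i\theta}$ so that the bracketed expression becomes $\tfrac{K+1}{2}e^{i(\phi-\theta)}+\tfrac{K-1}{2}e^{-i(\phi-\theta)}$. Splitting the exponentials into real and imaginary parts collapses this to $K\cos(\phi-\theta)+i\sin(\phi-\theta)$. Hence
\[
h_{K,\theta}(re^{i\phi})=re^{i\theta}\bigl[K\cos(\phi-\theta)+i\sin(\phi-\theta)\bigr].
\]
Taking absolute value gives $|h(re^{i\phi})|=r\sqrt{K^2\cos^2(\phi-\theta)+\sin^2(\phi-\theta)}=r\sqrt{1+(K^2-1)\cos^2(\phi-\theta)}$, and reading off the argument of the bracketed complex number gives $\arg h(re^{i\phi})=\theta+\psi(\phi)$ where $\tan\psi(\phi)=\tan(\phi-\theta)/K$.

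Now I raise to the $n$-th power. The modulus identity $|H|=|h|^n$ yields the claimed formula for $A(r,\phi)$. For the angle, $\arg H(re^{i\phi})=n\theta+n\psi(\phi)$, which depends only on $\phi$, so this is exactly $\arg\widetilde{H}(e^{i\phi})$. Dividing by $n$, subtracting $\theta$, and taking $\tan$ produces \eqref{eq:polarH}. The only mild obstacle is the branch issue: the relation $\tan\psi=\tan(\phi-\theta)/K$ does not uniquely determine $\psi$, but choosing $\psi$ continuously in $\phi$ with $\psi(\theta)=0$ and allowing the natural limit at $\phi-\theta=\pi/2$ resolves this, and because $\tan$ is $\pi$-periodic the final identity \eqref{eq:polarH} is insensitive to the choice.
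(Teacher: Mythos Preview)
Your proof is correct and follows essentially the same approach as the paper: a direct computation of $h_{K,\theta}(re^{i\phi})$ in polar form, followed by raising to the $n$th power. Your factorization $h(re^{i\phi})=re^{i\theta}[K\cos(\phi-\theta)+i\sin(\phi-\theta)]$ is a slightly cleaner packaging of the same calculation; the paper instead computes $|h|^2$ by expanding and handles the argument via the conjugation $h_{K,\theta}(z)=e^{i\theta}h_0(e^{-i\theta}z)$ with $h_0(x+iy)=Kx+iy$, but the content is identical.
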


\begin{proof}
This is an elementary exercise, and so we just sketch the details. Since $h$ has the form \eqref{eq:h}, then writing $z=re^{i\phi}$ yields
\begin{align*}
h(re^{i\phi}) &= \left (\frac{K+1}{2} \right ) re^{i\phi} + e^{2i\theta}\left ( \frac{K-1}{2} \right ) re^{-i\phi}\\
&=\left (\frac{K+1}{2} \right ) r\cos \phi + \left (\frac{K-1}{2} \right )r\cos(\phi - 2\theta) + i\left [ \left (\frac{K+1}{2} \right ) r\sin \phi  + \left (\frac{K-1}{2} \right ) r\sin (\phi - 2\theta) \right ].
\end{align*}
Therefore
\begin{align*}
|h(re^{i\phi})|^2 &= \left (\frac{K+1}{2} \right )^2r^2 + \frac{(K^2-1)}{2} r^2 \cos (2(\phi - \theta)) + \left (\frac{K-1}{2} \right )^2r^2 \\
&=r^2( 1+ (K^2-1)\cos^2(\phi - \theta) ).
\end{align*}
Since $H(z) = h(z)^n$, we obtain the formula for $A$.

Next, let $h_0(x+iy) = Kx+iy$ for $K>1$. Then
\[ \tan \arg h_0(re^{i\phi}) = \frac{\tan(\phi)}{K}.\]
In general, $h_{K,\theta}(z) = e^{i\theta}(h_0(e^{-i\theta}z))$ and so
\[ \tan \arg( h(re^{i\phi}) - \theta) = \frac{\tan(\phi - \theta)}{K}.\]
Finally, since $H(z) = h(z)^n$, we obtain \eqref{eq:polarH}.
\end{proof}

\section{Circle endomorphisms and Blaschke products}

In this section, we will show how $\widetilde{H}$ is related to a Blaschke product. First of all, since $\widetilde{H}$ is an orientation preserving degree $n$ mapping of $\partial \D$ it is a circle endomorphism.
Every circle endomorphism $g$ of degree $n$ can be lifted to a mapping $\widehat{g}:\R \to \R$ which satisfies
\[ g(x+2\pi) = g(x) + 2\pi n.\]

\begin{definition}
\label{def:tg}
Given a circle endomorphism $g:\partial \D \to \partial \D$ of degree $n$, consider its lift $\widehat{g}$ to $\R$.
Then we define $T(g)$ to be the degree $n$ endomorphism whose lift to $\R$ is given in $[0,2\pi)$ by
\[ \widehat{T(g)}(x) = \frac{ \widehat{g}(2x)}{2}.\]
\end{definition}

The map $T(g)$ is well-defined and essentially rescales $g$ by a factor $2$. We can view $T(g)$ informally as the conjugate of $g$ by $z^2$, since $g(z^2) = [T(g)(z)]^2$ by construction. See Figure \ref{fig:1} for a diagram showing how $g$ and $T(g)$ are related on $\R / (2\pi \Z)$. 

\begin{figure}[h]
\begin{center}
\includegraphics[width = 7in]{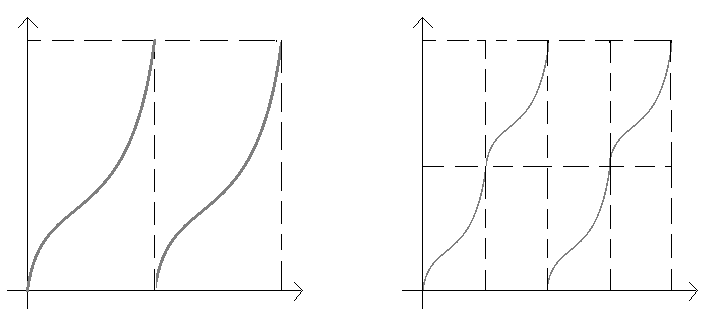}
\caption{A lift of a degree $2$ map $g$ to $[0,2\pi)$ on the left, and the lift of the corresponding map $T(g)$ on the right.}
\label{fig:1}
\end{center}
\end{figure}

\begin{theorem}
\label{thm:agree}
Let $H = H_{K,\theta,n}$ for $K>1$, $\theta \in (-\pi/2, \pi/2]$ and $n\geq 2$.
The map $\widetilde{H}:\partial \D \to \partial \D$ agrees with the function $T(B)$, where $B$ is the Blaschke product defined by
\begin{equation}\label{eq:B} B(z) = \left ( \frac{z+\mu}{1+\overline{\mu}z} \right ) ^n,\quad |z|=1,\end{equation}
and where $\mu = e^{2i\theta} \left (\frac{K-1}{K+1} \right )$.
\end{theorem}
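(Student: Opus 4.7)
The plan is to compute the arguments of $\widetilde{H}(e^{i\phi})$ and $T(B)(e^{i\phi})$ directly from their definitions and show that they agree.

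First I would simplify $\arg B(z)$ on the unit circle. For $|z|=1$ the identity $1+\overline{\mu}z = z\,\overline{(z+\mu)}$ is immediate from $z\overline{z}=1$, and taking arguments gives
\[ \arg B(z) = n\arg\!\left(\frac{z+\mu}{1+\overline{\mu}z}\right) = 2n\arg(z+\mu) - n\arg z \pmod{2\pi}. \]
Setting $z = e^{2i\phi}$ and writing $\mu = \rho e^{2i\theta}$ with $\rho = (K-1)/(K+1)$, the key factorization
\[ e^{2i\phi}+\mu = e^{i(\phi+\theta)}\bigl(e^{i(\phi-\theta)}+\rho e^{-i(\phi-\theta)}\bigr) \]
reduces the computation to a single complex number with real part $(1+\rho)\cos(\phi-\theta)$ and imaginary part $(1-\rho)\sin(\phi-\theta)$. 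Since $(1-\rho)/(1+\rho) = 1/K$, its argument on the principal branch equals $\arctan(K^{-1}\tan(\phi-\theta))$, and substituting back yields
\[ \arg B(e^{2i\phi}) = 2n\theta + 2n\arctan\!\left(\frac{\tan(\phi-\theta)}{K}\right) \pmod{2\pi}. \]

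Next, by Definition \ref{def:tg} the lift of $T(B)$ satisfies $\widehat{T(B)}(\phi) = \widehat{B}(2\phi)/2$, so halving the above formula gives
\[ \arg T(B)(e^{i\phi}) = n\theta + n\arctan\!\left(\frac{\tan(\phi-\theta)}{K}\right) \pmod{2\pi}, \]
which is exactly the expression for $\arg \widetilde{H}(e^{i\phi})$ furnished by Proposition \ref{prop:hform}. Hence the two maps agree on the open set where $\cos(\phi-\theta)>0$ and the principal branch of $\arctan$ applies.

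The main obstacle I anticipate is the bookkeeping for branches and lifts: the $\arctan$ formula only determines the arguments modulo $\pi$, and the identities above were derived modulo $2\pi$. To close the argument I would observe that $\widetilde{H}$ and $T(B)$ are both continuous orientation-preserving degree-$n$ circle endomorphisms, so their lifts $\widehat{\widetilde{H}},\widehat{T(B)}\colon\R\to\R$ are continuous functions determined up to an integer multiple of $2\pi$. Matching the two at the single point $\phi=\theta$ (where each evaluates to $n\theta$ modulo $2\pi$) and using continuity on the dense set where the principal branch of $\arctan$ is valid then forces the lifts to coincide on all of $\R$, and so $\widetilde{H} = T(B)$ on $\partial \D$.
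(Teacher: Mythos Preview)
Your proof is correct and close in spirit to the paper's, though the computations run in opposite directions. The paper starts from the tangent relation in Proposition~\ref{prop:hform}, rewrites $\tan$ in terms of complex exponentials, and solves algebraically for $e^{2i\arg\widetilde{H}(e^{i\phi})/n}$ to obtain the M\"obius form $(e^{2i\phi}+\mu)/(1+\overline{\mu}e^{2i\phi})$ directly, then reads off $\widetilde{H}=T(B)$. You instead start from $B$, use the identity $1+\overline{\mu}z=z\,\overline{(z+\mu)}$ and the factorization of $e^{2i\phi}+\mu$ to compute $\arg B(e^{2i\phi})$ in arctangent form, halve via the definition of $T$, and match against Proposition~\ref{prop:hform}.

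Both routes are direct argument computations hinging on the same relation $\tan(\psi-\theta)=K^{-1}\tan(\phi-\theta)$, so the difference is organizational rather than conceptual. Your version has the merit of being explicit about the branch ambiguity, which the paper glosses over; note, though, that the set $\{\cos(\phi-\theta)>0\}$ is an arc of length $\pi$, not a dense set, so your final continuity step should either invoke the real-analyticity of both circle maps, or use the antipodal symmetry $\widetilde{H}(-z)=(-1)^n\widetilde{H}(z)$ and $T(B)(-z)=(-1)^nT(B)(z)$ to transport agreement from one semicircle to the other.
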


\subsection{Blaschke products}

Before proving Theorem \ref{thm:agree}, we will review some of the properties of Blaschke products and, in particular, dynamical aspects we will need.

Recall that a finite Blaschke product is a function $B:\overline{\C} \to \overline{\C}$ given by
\begin{equation}\label{eq:bl} B(z) = e^{i\theta} \prod_{i=1}^n \left ( \frac{z-w_i}{1-\overline{w_i}z }\right ),\end{equation}
for some $\theta \in [0,2\pi)$ and $w_i \in \D$ for $i=1,\ldots,n$. We call a Blaschke product non-trivial if it is not a M\"obius mapping, that is, if $n\geq 2$. For a finite Blaschke product, $\D, \partial \D$ and $\overline{\C} \setminus \overline{\D}$ are all completely invariant.

Every finite degree self-mapping of $\D$ is a finite Blaschke product \cite[p.19]{BM}, and so they can be viewed as analogues for polynomials in the disk. 
By the Schwarz-Pick Lemma, $B$ can have at most one fixed point in $\D$. If $z_0$ is a fixed point of $B$, then it is straightforward to show that $1/\overline{z_0}$ is also a fixed point of $B$. Hence all but possibly two (with the convention that infinity is a fixed point if some $w_i=0$) of the fixed points of $f$ must lie on $\partial \D$. 

The Denjoy-Wolff Theorem \cite[p.58]{Milnor} states that if $f:\D\to \D$ is holomorphic and not an elliptic M\"obius mapping then there is some point $z_0 \in \overline{\D}$ such that $f^n(z) \to z_0$ for every $z\in \D$. We call such a point a {\it Denjoy-Wolff point} of $f$.

Using the Denjoy-Wolff Theorem, there is a classification of finite Blaschke products in analogy with that for M\"obius transformations:
\begin{enumerate}[(i)]
\item $B$ is called {\it hyperbolic} if the Denjoy-Wolff point $z_0$ of $B$ lies on $\partial \D$ and $B'(z_0)<1$,
\item $B$ is called {\it parabolic} if the Denjoy-Wolff point $z_0$ of $B$ lies on $\partial \D$ and $B'(z_0)=1$,
\item $B$ is called {\it elliptic} if the Denjoy-Wolff point $z_0$ of $B$ lies in $\D$. In this case, we must have $|B'(z_0)|<1$.
\end{enumerate}

It is not hard to see that the Julia set of a Blaschke product must be contained in $\partial \D$ and is either the whole of $\partial \D$ or a Cantor subset of $\partial \D$. We summarize the classification of Blaschke products in terms of the Julia set as follows, see for example \cite{CDP}.

\begin{theorem}[\cite{CDP}]
Let $B$ be a non-trivial finite Blaschke product. Then $J(B) = \partial \D$ if and only if $B$ is elliptic or $B$ is parabolic and $B''(z_0) =0$, where $z_0$ is the Denjoy-Wolff point of $B$ on $\partial \D$. On the other hand, $J(B)$ is a Cantor subset of $\partial \D$ if and only if $B$ is hyperbolic or $B$ is parabolic and $B''(z_0)\neq 0$.
\end{theorem}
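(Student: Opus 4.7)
The plan is to first show $J(B)\subseteq \partial \D$ and then run a case analysis on the Denjoy-Wolff classification of $B$. For the reduction, both $\D$ and $\overline{\C}\setminus\overline{\D}$ are completely invariant under $B$. The Denjoy-Wolff Theorem applied on $\D$ gives local uniform convergence of $\{B^k\}$ to the Denjoy-Wolff point $z_0\in\overline{\D}$; on the exterior, the Blaschke symmetry $B(1/\overline{z})=1/\overline{B(z)}$ transfers this convergence, now to $1/\overline{z_0}$ (which equals $z_0$ when $z_0 \in \partial\D$). Hence both components lie in the Fatou set and $J(B)\subseteq\partial\D$.

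The elliptic case is then immediate: $\D$ and $\overline{\C}\setminus\overline{\D}$ are distinct Fatou components with common boundary $\partial\D$, forcing $J(B)=\partial\D$. In the hyperbolic case, $B|_{\partial\D}$ is a real-analytic circle endomorphism of degree $n$ with an attracting fixed point at $z_0$ (multiplier $B'(z_0)\in(0,1)$) and $n-1$ repelling boundary fixed points: by the Julia-Wolff boundary Schwarz lemma the multiplier at any boundary fixed point is real and at least $1$, with equality only at the Denjoy-Wolff point. A Koenigs linearisation of $B$ at $z_0$ yields an open attracting arc on $\partial\D$, and pulling it back by the inverse branches of $B|_{\partial\D}$ produces a dense open $B$-invariant basin of $z_0$ whose complement is an invariant Cantor subset on which $B$ is uniformly expanding; this Cantor set is precisely $J(B)$.

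The parabolic case is the main obstacle. The strategy is to pass to a half-plane model via a M\"obius map $\varphi:\D\to\Hp$ with $\varphi(z_0)=\infty$, and study $F=\varphi\circ B\circ\varphi^{-1}:\Hp\to\Hp$. A short Taylor expansion of $B$ at $z_0$ shows that the constant coefficient $\alpha$ in the parabolic expansion $F(w)=w+\alpha+O(1/w)$ is a nonzero scalar multiple of $B''(z_0)$, so $\alpha=0$ if and only if $B''(z_0)=0$. If $\alpha\neq 0$, then $F$ acts near $\infty$ as a translation up to lower-order terms, and standard Fatou-Leau petal theory produces a dense open basin of $z_0$ on $\partial\D$; the complement is a Cantor $J(B)$ by the same inverse-branch reasoning used in the hyperbolic case. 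If $\alpha=0$, then $F(w)=w+\beta/w+O(1/w^2)$ is doubly parabolic, and a Fatou-coordinate analysis (exploiting the Schwarz-reflection of $F$ across $\R$) shows that orbits on $\R\setminus\{\infty\}$ accumulate densely on $\R$, so normality fails at every point of $\partial\D$ and $J(B)=\partial\D$. The hardest step is this last doubly-parabolic analysis, which requires genuine Fatou-coordinate work rather than elementary linearisation.
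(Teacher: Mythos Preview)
The paper does not prove this theorem at all: it is simply quoted from \cite{CDP} as a piece of background on Blaschke products, with no argument supplied. So there is no ``paper's own proof'' against which to measure your attempt; you are supplying a proof where the paper is content to cite one.

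That said, a few comments on the sketch itself. The reduction $J(B)\subseteq\partial\D$ and the elliptic case are clean and standard. In the hyperbolic case your outline is correct in spirit, but the assertion that the complement of the attracting basin on $\partial\D$ is a Cantor set needs an explicit reason: you should invoke that $J(B)$ is perfect (true for any rational Julia set of degree $\geq 2$) and that it is totally disconnected because the basin, being the increasing union of pre-images of an arc, is open and dense. The parabolic analysis is where the real content lies, and your treatment of the $B''(z_0)=0$ subcase is too loose. The statement that ``orbits on $\R\setminus\{\infty\}$ accumulate densely on $\R$'' is not the mechanism that forces $J(B)=\partial\D$; what you actually need is the Leau--Fatou flower picture together with the Schwarz-reflection symmetry of $B$ across $\partial\D$. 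When $B''(z_0)\neq 0$ there is a single attracting petal, which by symmetry must straddle $\partial\D$ and hence contains an arc of $\partial\D$ in the Fatou set. When $B''(z_0)=0$ the parabolic multiplicity is at least three, giving at least two attracting petals; the reflection symmetry then forces these petals to sit in $\D$ and in $\overline{\C}\setminus\overline{\D}$ respectively, with the repelling directions tangent to $\partial\D$, so no arc of $\partial\D$ lies in the Fatou set. Framing it this way makes the dichotomy transparent and avoids the vague ``Fatou-coordinate analysis'' you allude to.
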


The class of Blaschke products we will be interested in in this paper are the unicritical Blaschke products, namely those with one critical point in $\D$.
For $n\geq 2$, we define the set 
\[ \mathcal{B}_n  = \left \{ B_w(z):= \left ( \frac{z-w}{1-\overline{w}z} \right )^n : w\in \D, \arg(w) \in \left [0,\frac{2\pi}{n-1} \right ) \right \}\]
of normalized unicritical Blaschke products of degree $n$, which is parameterized by the sector 
\[ S_n = \left \{ w\in \D: \arg(w) \in \left [0,\frac{2\pi}{n-1} \right ) \right \} .\]
We denote by $\mathcal{E}_n$ those parameters in $S_n$ which give elliptic unicritical Blaschke products and by $\mathcal{M}_n$ those parameters in $S_n$ which give rise to unicritical Blaschke products with connected Julia set.
We further denote by $\widetilde{\mathcal{E}_n}\subset \D$  the set 
\[ \widetilde{\mathcal{E}_n} = \bigcup _{j=0}^{n-2} R_j(\mathcal{E}_n),\]
where $R_j$ is the rotation through angle $2\pi j/(n-1)$, and denote by $\widetilde{\mathcal{M}_n}$ the corresponding set for $\mathcal{M}_n$. The following theorem summarizes results in \cite{Fletcher}.

\begin{theorem}[\cite{Fletcher}]
\label{thm:F1}
Let $n\geq 2$. 
\begin{enumerate}[(i)]
\item Every unicritical Blaschke product of degree $n$ is conjugate by M\"obius mappings to a unique element of $\mathcal{B}_n$.
\item The connectedness locus $\mathcal{M}_n$ consists of $\mathcal{E}_n$ and one point on the relative boundary in $S_n$ where $|w| = \frac{n-1}{n+1}$.
\item The set $\widetilde{\mathcal{E}_n} \subset \D$ is a starlike domain about $0$ which contains the disk $ \{w\in \D : |w| <\frac{n-1}{n+1} \}$. 
\item The set $\widetilde{\mathcal{M}_n}$ consists of $\widetilde{\mathcal{E}_n}$ and $n-1$ points on its relative boundary in $\D$.
\item If $n$ is even, then the ray $\{ re^{i\phi} :0\leq r <1 \}$ is contained in $\widetilde{\mathcal{E}_n}$ for $\phi = 2k\pi/(n-1)$ where $k=0,1,\ldots,n-2$. The ray $\{ re^{i\phi} :0\leq r \leq \frac{n-1}{n+1} \}$ is contained in $\widetilde{\mathcal{M}_n}$ for $\phi = (2k+1)\pi/(n-1)$ where $k=0,1,\ldots,n-2$.
\item If $n$ is odd, then the ray $\{ re^{i\phi} :0\leq r <1 \}$ is contained in $\widetilde{\mathcal{E}_n}$ for $\phi = (2k+1)\pi/(n-1)$ where $k=0,1,\ldots,n-2$. The ray $\{ re^{i\phi} :0\leq r \leq \frac{n-1}{n+1} \}$ is contained in $\widetilde{\mathcal{M}_n}$ for $\phi = 2k\pi/(n-1)$ where $k=0,1,\ldots,n-2$.
\end{enumerate}
\end{theorem}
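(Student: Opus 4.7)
The plan is to reduce all six parts to an analysis of the boundary fixed points of $B_w(z)=[(z-w)/(1-\overline{w}z)]^n$. For part (i), a unicritical Blaschke product of degree $n$ must have all its zeros at a single point $w\in\D$, so $B(z)=e^{i\alpha}[(z-w)/(1-\overline{w}z)]^n$ for some $\alpha$. Conjugating by $z\mapsto\zeta z$ with $|\zeta|=1$ sends this to $\zeta^{n-1}e^{i\alpha}[(z-w/\zeta)/(1-\overline{w/\zeta}z)]^n$; choosing $\zeta^{n-1}=e^{-i\alpha}$ eliminates the prefactor, and the $n-1$ solutions for $\zeta$ rotate $w$ by the $(n-1)$-th roots of unity, so exactly one representative has $\arg w\in[0,2\pi/(n-1))$.

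For the remaining parts I would first establish two key formulas by direct calculation. Writing $M_w(z)=(z-w)/(1-\overline{w}z)$, at any boundary fixed point $z_0\in\partial\D$ of $B_w$ one obtains
\[
B_w'(z_0)=\frac{n(1-|w|^2)}{|z_0-w|^2}.
\]
The parabolic condition $B_w'(z_0)=1$ thus reads $|z_0-w|^2=n(1-|w|^2)$; expanding with $|z_0|=1$ gives $2\Re(z_0\overline{w})=1-n+(n+1)|w|^2$, which is solvable on $\partial\D$ iff $|w|\geq(n-1)/(n+1)$, with equality forcing $z_0=-w/|w|$. Combining with the fixed-point equation $M_w(z_0)^n=z_0$ and the observation $M_w(-w/|w|)=-w/|w|$ yields $z_0^{n-1}=1$, hence $(w/|w|)^{n-1}=(-1)^{n-1}$. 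So on the critical circle $|w|=(n-1)/(n+1)$ there are exactly $n-1$ parabolic parameters, of which precisely one lies in $S_n$. A further direct computation shows
\[
B_w''(z_0)=\frac{z_0^{n-2}}{n}\bigl[(n-1)+(n+1)z_0\overline{w}\bigr],
\]
and the identity $z_0\overline{w}=-|w|=-(n-1)/(n+1)$ forces the bracket to vanish. By the theorem of \cite{CDP} quoted above, each of these $n-1$ parameters lies in $\widetilde{\mathcal{M}_n}\setminus\widetilde{\mathcal{E}_n}$, proving the counts in (ii) and (iv). For $|w|<(n-1)/(n+1)$ the formula for $B_w'$ forces every boundary fixed point to be repelling, so the Denjoy--Wolff point must be interior and $B_w$ is elliptic, giving the disk inclusion in (iii).

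Starlikeness in (iii) follows from a radial monotonicity argument: as $|w|$ increases along a fixed direction, the minimum of $B_w'$ over boundary fixed points decreases, so the elliptic property is lost at most once, and each elliptic parameter is radially connected to the origin. Parts (v) and (vi) then read off the special arguments from $(w/|w|)^{n-1}=(-1)^{n-1}$: for even $n$ these are $\arg w=(2k+1)\pi/(n-1)$ with $k=0,\ldots,n-2$, along which the elliptic ray terminates at radius $(n-1)/(n+1)$ in a parabolic-$B''=0$ endpoint, so the segment up to that radius lies in $\widetilde{\mathcal{M}_n}$; the complementary directions $\arg w=2k\pi/(n-1)$ carry no such obstruction and the full ray lies in $\widetilde{\mathcal{E}_n}$. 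The parity swap $(-1)^{n-1}=1$ for odd $n$ reverses the two roles. The main obstacle is this last step: along the complementary directions I would have to exploit the rotational symmetry of $B_w$ and rule out any parabolic bifurcation at intermediate radii, in order to conclude that the Denjoy--Wolff point stays inside $\D$ for every $|w|<1$.
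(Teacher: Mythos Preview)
The paper does not prove this theorem: it is quoted as a summary of results from \cite{Fletcher} (``The following theorem summarizes results in \cite{Fletcher}''), and no argument is given in the present paper. So there is no in-paper proof to compare your proposal against.

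That said, a few remarks on your sketch. Your derivative formula $B_w'(z_0)=n(1-|w|^2)/|z_0-w|^2$ at boundary fixed points is correct, and your identification of the $n-1$ parabolic parameters on the circle $|w|=(n-1)/(n+1)$ via $(w/|w|)^{n-1}=(-1)^{n-1}$ is right and gives the directions in (v)--(vi) cleanly. Your argument for (i) is fine.

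The genuine gaps are the ones you flag yourself. For (iii), ``the minimum of $B_w'$ over boundary fixed points decreases along radii'' is asserted, not proved; the fixed points themselves move with $w$, so monotonicity of this minimum is not automatic and is exactly the content of starlikeness. For the full-ray statements in (v)--(vi) you would need to show that along the complementary directions no parabolic bifurcation occurs for any $|w|<1$; invoking ``rotational symmetry'' is not enough, since the relevant symmetry only relates the $n-1$ sectors to one another and says nothing about what happens along a single radius. These are precisely the nontrivial parts of \cite{Fletcher}, and your outline stops short of them.
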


We remark that the boundary curve of $\widetilde{\mathcal{E}_n}$ has recently been shown to be an epicycloid in $\overline{\D}$ with $n-1$ cusps, see \cite{CFY}.

\subsection{Connection between $\widetilde{H}$ and Blaschke products}

\begin{proof}[Proof of Theorem \ref{thm:agree}]

Recall from \eqref{eq:polarH} that $\tan \left ( \frac{\arg \widetilde{H}(e^{i\phi})}{n} - \theta \right ) = \frac{ \tan(\phi - \theta)}{K}$. Writing this using the exponential function yields
\[ \frac{ \exp (2i(\arg \widetilde{H}(e^{i\phi})/n - \theta )) -1 }{ \exp ( 2i (\arg \widetilde{H}(e^{i\phi})/n - \theta )) +1 }
 = \frac{ \exp (2i(\phi - \theta) )-1}{K (\exp ( 2i(\phi - \theta ) ) +1 )}.\]
Rearranging this in terms of $e^{2i\arg \widetilde{H}(e^{i\phi}) /n}$, we obtain
\[ e^{2i\arg \widetilde{H}(e^{i\phi}) /n} = \frac{ e^{2i\phi} + e^{2i\theta} \left( \frac{K-1}{K+1} \right ) }{1+e^{2i\phi} e^{-2i\theta} \left ( \frac{K-1}{K+1} \right ) }  = \frac{ e^{2i\phi} + \mu }{1+\overline{\mu} e^{2i\phi} },\]
where $\mu = e^{2i\theta} \left ( \frac{K-1}{K+1} \right )$. We therefore see that
\[ \arg \widetilde{H}(e^{i\phi}) = \frac{n}{2} \arg \left ( \frac{e^{2i\phi} + \mu}{1+\overline{\mu}e^{2i\phi}} \right ).\]
Therefore $\widetilde{H}$ is obtained by taking the Blaschke product
\[ B(z) = \left ( \frac{ z+\mu}{1+\overline{\mu}z} \right ) ^n \]
restricted to $\partial \D$, lifting this circle endomorphism to $\R$, conjugating by $x\mapsto 2x$ and projecting back to $\partial \D$. In summary, $\widetilde{H} = T(B)$ as claimed.
\end{proof}

We remark here that the cases where $n$ is even or odd differ. When $n$ is even, $T(B)$ is the Blaschke product
\[ T(B)(z) = \left ( \frac{z^2+\mu }{1+\overline{\mu}z^2} \right )^{n/2}, \quad |z|=1.\]
However, when $n$ is odd, $T(B)$ is no longer a Blaschke product because $n/2$ is not an integer.

\subsection{Classification of $H$ and fixed rays}

We can classify the mappings $H$ in terms of the associated Blaschke product $B$.

\begin{definition}
Let $n \geq 2$, $K\geq 1$, $\theta \in(-\pi/2,\pi/2]$ and $H = H_{K,\theta,n}$. Denote by $B$ the associated Blaschke product given by \eqref{eq:B} where $\mu = e^{2i\theta}\left ( \frac{K-1}{K+1} \right )$. Then we call $H$ elliptic, parabolic or hyperbolic if $B$ is elliptic, parabolic or hyperbolic respectively.
\end{definition}

Since $H$ maps rays to rays, it is of interest to find which rays are fixed by $H$. Fixed rays of $H$ correspond to fixed points of $\widetilde{H}$ and these have a relation to the fixed points of $B$.

\subsubsection{Even degree}

In this subsection, we assume that $n\geq 2$ is even.
\begin{lemma}
\label{lem:fixpts1}
With the notation above, if $n$ is even then there is a one-to-one correspondence between fixed points of $B$ on $\partial \D$ and fixed points of $\widetilde{H}$. 
\end{lemma}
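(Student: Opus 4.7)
The plan is to use the lift relation $\widehat{T(B)}(x) = \widehat{B}(2x)/2$ from Definition \ref{def:tg} together with the identification $\widetilde{H} = T(B)$ from Theorem \ref{thm:agree}. I would fix a lift $\widehat{B}\colon \R \to \R$ of $B|_{\partial \D}$; since $B$ has degree $n$, $\widehat{B}(t + 2\pi) = \widehat{B}(t) + 2\pi n$ for all $t$. A fixed point of $\widetilde{H}$ on $\partial \D$ then corresponds to some $x \in [0, 2\pi)$ with $\widehat{B}(2x) = 2x + 4\pi m$ for an integer $m$, while a fixed point of $B$ on $\partial \D$ corresponds to some $y \in [0, 2\pi)$ with $\widehat{B}(y) = y + 2\pi k$ for an integer $k$.

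Next I would define $\Phi(e^{ix}) = e^{2ix}$ on the set of fixed points of $\widetilde{H}$; since $\widehat{B}(2x) - 2x = 4\pi m \in 2\pi\Z$, the image $e^{2ix}$ indeed lies in the set of fixed points of $B$ on $\partial \D$, so $\Phi$ is well-defined. To establish that $\Phi$ is a bijection, I would pick any fixed point $e^{iy}$ of $B$ with $\widehat{B}(y) - y = 2\pi k$ and examine its two preimages under $x \mapsto e^{2ix}$, namely $x_0 = y/2$ and $x_1 = y/2 + \pi$ in $[0, 2\pi)$. Directly, $\widehat{B}(2 x_0) - 2 x_0 = 2\pi k$, while the degree relation gives $\widehat{B}(2 x_1) - 2 x_1 = \widehat{B}(y + 2\pi) - (y + 2\pi) = 2\pi(k + n - 1)$. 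Hence $x_0$ yields a fixed point of $\widetilde{H}$ iff $k$ is even, and $x_1$ does iff $k + n - 1$ is even.

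Since $n$ is even, $n - 1$ is odd, so $k$ and $k + n - 1$ have opposite parities, and exactly one of $x_0, x_1$ corresponds to a fixed point of $\widetilde{H}$. This simultaneously proves surjectivity (every fixed point of $B$ has at least one $\Phi$-preimage) and injectivity (every fixed point of $B$ has at most one $\Phi$-preimage), so $\Phi$ is the desired bijection. The only delicate point is the parity comparison at the end, which is precisely where the even-degree hypothesis is used; for $n$ odd, $k$ and $k + n - 1$ share a common parity, so each fixed point of $B$ would contribute either two or zero fixed points of $\widetilde{H}$, which presumably accounts for the separate treatment of the odd case in the following subsection.
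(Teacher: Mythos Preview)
Your proof is correct and follows essentially the same route as the paper's: both arguments pass through the squaring map $e^{ix}\mapsto e^{2ix}$ to relate fixed points of $\widetilde{H}$ to fixed points of $B$, and both show that of the two antipodal preimages of a fixed point of $B$, exactly one is fixed by $\widetilde{H}$. The paper phrases this last step geometrically, using that for even $n$ one has $\widetilde{H}(e^{i\phi})=\widetilde{H}(-e^{i\phi})$, so if $\widetilde{H}(e^{i\phi_1})\in\{e^{i\phi_1},-e^{i\phi_1}\}$ then exactly one of the antipodal pair is fixed and the other maps onto it; your version encodes the same dichotomy as a parity comparison on the lift, which is a clean and perhaps slightly more explicit way to organize the same idea.
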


\begin{proof}
Let $n$ be even. Then $\widetilde{H}(e^{i\phi}) = \widetilde{H}(-e^{i\phi})$. Since 
$\widetilde{H} = T(B)$ by Theorem \ref{thm:agree}, we have
$2\arg \widetilde{H}(e^{i\phi}) = \arg B(e^{2i\phi})$. Suppose $B$ fixes $e^{i\phi_0}$ and $\phi_1 \in \{ \phi_0/2, \phi_0/2 +\pi \}$. Then
\[ 2\arg \widetilde{H}( e^{i\phi_1}) = \arg B(e^{i\phi_0}) = \phi_0,\]
and so $\arg \widetilde{H}( e^{i\phi_1}) \in \{ \phi_1, \phi_1 +\pi \}$. Since $\widetilde{H}$ maps antipodal points onto the same image, this means that one of $e^{i\phi_1}$ and $-e^{i\phi_1}$ is fixed by $\widetilde{H}$ and the other is mapped onto this fixed point. On the other hand, if $e^{i\phi_1}$ is fixed by $\widetilde{H}$, then it is easy to see that $e^{2i\phi_1}$ is fixed by $B$. Hence $B$ and $\widetilde{H}$ have the same number of fixed points.
\end{proof}

\begin{theorem}\label{thm:H1}
Let $n \geq 2$ be even, $K\geq 1$, $\theta \in(-\pi/2,\pi/2]$ and let $\mu = e^{2i\theta} \left (\frac{K-1}{K+1} \right )$. Then
$H=H_{K,\theta,n}$ is elliptic, parabolic or hyperbolic according to whether $-\mu \in \widetilde{\mathcal{E}_n}$, the relative boundary of $\widetilde{\mathcal{E}_n}$ in $\D$ or $\D \setminus \overline{\widetilde{\mathcal{E}_n}}$ respectively. Further, if $n,\theta$ are fixed,
there exists $K_{\theta}\in (1,\infty ]$ such that 
\begin{enumerate}[(i)]
\item for $1\leq K<K_{\theta}$, $H$ is elliptic and $H$ has $n-1$ fixed rays;
\item for $K=K_{\theta}$, $H$ is parabolic and $H$ has at most $n$ fixed rays;
\item for $K>K_{\theta}$, $H$ is hyperbolic and $H$ has $n+1$ fixed rays.
\end{enumerate}
\end{theorem}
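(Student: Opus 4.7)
The plan is to combine Theorem \ref{thm:agree}, Theorem \ref{thm:F1}, and Lemma \ref{lem:fixpts1}. By Theorem \ref{thm:agree}, $\widetilde{H} = T(B)$ where $B = B_{-\mu}$ is a unicritical Blaschke product of degree $n$. By Theorem \ref{thm:F1}(i), after a rotation we may conjugate $B$ into $\mathcal{B}_n$ without changing its Denjoy-Wolff type, so by the definition of $\widetilde{\mathcal{E}_n}$ we have $B$ elliptic if and only if $-\mu \in \widetilde{\mathcal{E}_n}$. The relative boundary of $\widetilde{\mathcal{E}_n}$ in $\D$ is precisely the parabolic locus: as $-\mu$ crosses this boundary, the interior Denjoy-Wolff point of the elliptic $B$ collides with its reflection $1/\overline{z_0}$ on $\partial \D$, producing a boundary fixed point with derivative $1$. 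Beyond this boundary, only a hyperbolic attractor on $\partial \D$ remains.

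For the threshold $K_\theta$, fix $n$ and $\theta$. As $K$ ranges over $[1,\infty)$, the point $-\mu(K) = -e^{2i\theta}(K-1)/(K+1)$ traces a half-open radial segment from $0$ in $\D$, with $|{-\mu}|$ strictly increasing from $0$ to $1$. Theorem \ref{thm:F1}(iii) gives that $\widetilde{\mathcal{E}_n}$ is starlike about $0$, so its intersection with this segment is an initial subinterval $\{-e^{2i\theta} r : 0 \leq r < r_\theta\}$ for some $r_\theta \in (0,1]$. Setting $K_\theta = (1+r_\theta)/(1-r_\theta) \in (1,\infty]$, the three cases $-\mu(K) \in \widetilde{\mathcal{E}_n}$, $-\mu(K)$ on the relative boundary of $\widetilde{\mathcal{E}_n}$ in $\D$, or $-\mu(K) \in \D \setminus \overline{\widetilde{\mathcal{E}_n}}$ correspond respectively to $K < K_\theta$, $K = K_\theta$, and $K > K_\theta$.

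For the fixed ray counts, by Lemma \ref{lem:fixpts1} the fixed rays of $H$ biject with the fixed points of $B$ on $\partial \D$. Since $B$ is a rational map of degree $n$, it has exactly $n+1$ fixed points in $\overline{\C}$ counted with multiplicity. The Blaschke symmetry $B(1/\overline{z}) = 1/\overline{B(z)}$ pairs fixed points off $\partial\D$ as $\{z_0, 1/\overline{z_0}\}$, and by Schwarz-Pick there is at most one such pair. Any boundary fixed point $z_1$ that is not the Denjoy-Wolff point is repelling with $B'(z_1) > 1$, hence simple. The Denjoy-Wolff point on $\partial \D$ is simple in the hyperbolic case ($B' < 1$) and of multiplicity at least $2$ in the parabolic case ($B' = 1$). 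Hence the elliptic case yields $n-1$ simple boundary fixed points (the off-circle pair absorbs two of the $n+1$ roots), the hyperbolic case yields $n+1$ simple boundary fixed points, and the parabolic case yields at most $n$ distinct boundary fixed points. The most delicate step is verifying that in the hyperbolic regime no non-Denjoy-Wolff boundary fixed point satisfies $B'=1$, which follows from the uniqueness of the non-repelling fixed point of a Blaschke product in $\overline{\D}$ via the boundary Schwarz-Pick lemma.
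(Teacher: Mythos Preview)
Your proof is correct and follows essentially the same approach as the paper: both invoke Theorem~\ref{thm:agree} and Theorem~\ref{thm:F1} for the classification, use the starlikeness of $\widetilde{\mathcal{E}_n}$ to produce $K_\theta$, and appeal to Lemma~\ref{lem:fixpts1} together with the standard count of boundary fixed points of $B$ in the elliptic/parabolic/hyperbolic cases. Your version supplies more justification for the fixed-point multiplicities (via Schwarz--Pick and the boundary derivative analysis), whereas the paper simply quotes the counts as known facts about Blaschke products.
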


\begin{proof}
The first part of this theorem is Theorem \ref{thm:F1} applied to the situation with $H$.
Using the fact that $\widetilde{\mathcal{E}_n}$ is starlike with respect to $0$, if $\theta$ is fixed, then there exists $K_{\theta}\in (1,\infty]$ such that if $K<K_{\theta}$ then $B$ is elliptic, if $K=K_{\theta}$ then $B$ is parabolic, and if $K>K_{\theta}$ then $B$ is hyperbolic.

If $B$ is elliptic, then $B$ has a unique fixed point in $\D$, a unique fixed point in $\overline{\C} \setminus \overline{\D}$ and $n-1$ fixed points on $\partial \D$. 
If $B$ is hyperbolic then it has $n+1$ fixed points on $\partial \D$ and so $H$ has $n+1$ fixed rays. If $B$ is parabolic, then $B$ has at most $n$ fixed points on $\partial \D$. The claims then follow from Lemma \ref{lem:fixpts1}.
\end{proof}

\subsubsection{Odd degree}

In this subsection, we assume that $n\geq 3$ is odd. This case is a little more involved than the even case, because here fixed points of $B$ may not correspond to fixed points of $\widetilde{H}$.

\begin{lemma}
\label{lem:fixpts2}
With the notation as above, if $n$ is odd, then there is a one-to-one correspondence between fixed points of $B$ and pairs of antipodal points on $\partial \D$ which are either both fixed by $\widetilde{H}$ or switched by $\widetilde{H}$. 
Further,
\[ | \{ \text{fixed points of $\widetilde{H}$} \} | +  | \{ \text{switched points of $\widetilde{H}$} \} | = 2 |\{ \text{fixed points of $B$} \} | .\]
\end{lemma}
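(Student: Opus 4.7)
The plan is to use two ingredients. First, since $\widetilde H = T(B)$, the definition of $T$ gives the squaring relation $B(z^2) = [\widetilde H(z)]^2$ on $\partial \D$. Second, because $h$ is $\R$-linear it satisfies $h(-z) = -h(z)$, so $H(-z) = (-1)^n H(z) = -H(z)$ for $n$ odd; this passes to $\widetilde H$ and says that $\widetilde H$ sends antipodal points to antipodal points, $\widetilde H(-z) = -\widetilde H(z)$.

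With these in hand, I would set up the correspondence as follows. Starting from a fixed point $e^{i\psi_0}$ of $B$ on $\partial \D$, pick any $\phi_0$ with $e^{2i\phi_0} = e^{i\psi_0}$; the two such choices differ by $\pi$ and determine the same antipodal pair $\{e^{i\phi_0}, -e^{i\phi_0}\}$ on $\partial \D$. The squaring relation gives $[\widetilde H(e^{i\phi_0})]^2 = B(e^{i\psi_0}) = e^{i\psi_0} = e^{2i\phi_0}$, so $\widetilde H(e^{i\phi_0}) = \pm e^{i\phi_0}$. If the sign is $+$, then oddness forces $\widetilde H(-e^{i\phi_0}) = -e^{i\phi_0}$, so both antipodal points are fixed by $\widetilde H$. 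If the sign is $-$, then $\widetilde H(e^{i\phi_0}) = -e^{i\phi_0}$ and $\widetilde H(-e^{i\phi_0}) = e^{i\phi_0}$, so the two antipodal points are switched by $\widetilde H$. In particular the fixed and switched antipodal pairs of $\widetilde H$ are disjoint types, and each individual antipodal pair is homogeneous (two fixed, or two switched — never one of each).

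For the converse direction, any $e^{i\phi}$ which is either fixed by $\widetilde H$ or one of a switched antipodal pair satisfies $\widetilde H(e^{i\phi}) = \pm e^{i\phi}$, so $[\widetilde H(e^{i\phi})]^2 = e^{2i\phi}$ and hence $B(e^{2i\phi}) = e^{2i\phi}$; thus $e^{2i\phi}$ is a fixed point of $B$. The map $\{e^{i\phi}, -e^{i\phi}\} \mapsto e^{2i\phi}$ from antipodal pairs of fixed-or-switched points of $\widetilde H$ to fixed points of $B$ on $\partial \D$ is therefore well defined, and the previous paragraph shows it is surjective. Injectivity is automatic because $e^{2i\phi} = e^{2i\phi'}$ forces $e^{i\phi'} \in \{e^{i\phi}, -e^{i\phi}\}$. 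This yields the desired one-to-one correspondence.

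The counting identity then drops out: every fixed point of $B$ contributes exactly one antipodal pair, and that pair contributes exactly two points to $|\{\text{fixed points of }\widetilde H\}| + |\{\text{switched points of }\widetilde H\}|$. I do not foresee a serious obstacle; the only subtle point is to check that a single antipodal pair cannot mix a fixed point of $\widetilde H$ with a switched point, which the oddness of $\widetilde H$ rules out immediately.
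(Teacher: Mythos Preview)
Your proposal is correct and follows essentially the same approach as the paper: both arguments use the oddness relation $\widetilde{H}(-e^{i\phi}) = -\widetilde{H}(e^{i\phi})$ together with the squaring identity $[\widetilde{H}(e^{i\phi})]^2 = B(e^{2i\phi})$ to pass back and forth between fixed points of $B$ on $\partial\D$ and antipodal pairs that are fixed or switched by $\widetilde{H}$. Your write-up is somewhat more explicit about injectivity and surjectivity of the correspondence and about why a pair cannot mix a fixed and a switched point, but the underlying argument is the same.
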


\begin{proof}
Let $n$ be odd. Then $\widetilde{H}(e^{i\phi}) = -\widetilde{H}(-e^{i\phi})$. As in the proof of Lemma \ref{lem:fixpts1}, if $e^{i\phi_1}$ is fixed by $\widetilde{H}$, then $e^{2i\phi_1}$ is fixed by $B$. However, we also have that if $\arg \widetilde{H}(e^{i\phi_1}) = \phi_1 + \pi$, then $e^{2i\phi_1}$ is fixed by $B$. On the other hand, if $e^{i\phi_0}$ is fixed by $B$ and $\phi_1 \in \{ \phi_0/2, \phi_0/2 + \pi \}$ then we again conclude that $\arg \widetilde{H}( e^{i\phi_1}) \in \{ \phi_1, \phi_1 +\pi \}$. However, since $n$ is odd, there are two possibilities: either $e^{i\phi_1}$ and $-e^{i\phi_1}$ are both fixed points of $\widetilde{H}$ or they are switched by $\widetilde{H}$. The claim then follows.
\end{proof}

\begin{lemma}
\label{lem:Ij}
Let $g$ be an odd degree circle endomorphism with $g(e^{i\phi}) = -g(-e^{i\phi})$ and fixed points $w_1,w_1+\pi,w_2,w_2+\pi,\ldots,w_k,w_k+\pi$ with $0 \leq \arg w_1 \leq  \ldots \leq \arg w_k <\pi$. For $j=1,\ldots,k$ denote by $I_j$ the arc between $w_j$ and $w_{j+1}$, where we identify $w_{k+1}$ with $w_1+\pi$. Then $g$ either maps $I_j$ onto $I_j$ bijectively or $g$ maps $I_j$ onto $\partial \D$ by wrapping around exactly once. In the first case, this can only happen if one endpoint of $I_j$ is an attracting or neutral fixed point of $g$ and no point in $I_j$ can be mapped on its antipode. In the second case, there is at least one point in $I_j$ which maps onto its antipode.
\end{lemma}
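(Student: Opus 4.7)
The plan is to lift $g$ to $\hat g : \R \to \R$ with $\hat g(x + 2\pi) = \hat g(x) + 2\pi n$, and to exploit that the antipodal symmetry $g(-z) = -g(z)$, together with $n$ odd, forces $\hat g(x + \pi) = \hat g(x) + \pi n$; continuity pins the additive integer down to $(n-1)/2$. Let the arguments of the $2k$ fixed points in $[0, 2\pi)$ be $\phi_1 < \cdots < \phi_{2k}$, with $\phi_{j+k} = \phi_j + \pi$, and write $\hat g(\phi_j) = \phi_j + 2\pi p_j$ for integers $p_j$. Since $g$ is orientation-preserving, $p_j$ is non-decreasing. Because $(\phi_j, \phi_{j+1})$ contains no further fixed points and $\hat g(x) - x$ runs continuously from $2\pi p_j$ to $2\pi p_{j+1}$, the intermediate value theorem forces $p_{j+1} - p_j \in \{0, 1\}$: a gap of two or more would produce an interior solution to $\hat g(x) - x = 2\pi(p_j + 1)$, namely a new fixed point.

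Case $p_{j+1} = p_j$: monotonicity and matching endpoint values show $\hat g$ carries $[\phi_j, \phi_{j+1}]$ bijectively onto itself, hence $g : I_j \to I_j$ is a bijection. On the interior, $\hat g(x) - x - 2\pi p_j$ has constant sign, and that sign dictates whether $e^{i\phi_j}$ or $e^{i\phi_{j+1}}$ is attracting or neutral; in either case one endpoint is so. Moreover, each $I_j$ with $j \le k$ is contained in the closed half-circle running counterclockwise from $w_1$ to $-w_1$, whose interior is disjoint from its antipode; combined with $g(I_j) \subseteq I_j$ and the fact that the endpoints of $I_j$ are themselves fixed, this rules out any $z \in I_j$ with $g(z) = -z$.

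Case $p_{j+1} = p_j + 1$: here $\hat g$ runs from $\phi_j + 2\pi p_j$ to $\phi_{j+1} + 2\pi p_j + 2\pi$ on $[\phi_j, \phi_{j+1}]$, an interval of length $(\phi_{j+1} - \phi_j) + 2\pi > 2\pi$, so $g(I_j) = \partial \D$ and $g$ wraps $I_j$ around exactly once. Applying the intermediate value theorem to the continuous function $\hat g(x) - x - 2\pi p_j - \pi$, which is $-\pi$ at $\phi_j$ and $+\pi$ at $\phi_{j+1}$, yields $x^* \in (\phi_j, \phi_{j+1})$ with $\hat g(x^*) = x^* + 2\pi p_j + \pi$, so $g(e^{ix^*}) = -e^{ix^*}$. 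The only delicate step is the antipodal lift identity $\hat g(x + \pi) = \hat g(x) + \pi n$; beyond this, the lemma reduces to elementary monotone real-variable analysis on intervals, and I foresee no substantive obstacle.
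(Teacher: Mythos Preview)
Your argument is correct. The key steps---the dichotomy $p_{j+1}-p_j\in\{0,1\}$, the sign analysis of $\hat g(x)-x-2\pi p_j$ in the bijective case, and the intermediate-value argument producing an antipodal point in the wrap-around case---all go through as you describe. The antipodal lift identity $\hat g(x+\pi)=\hat g(x)+\pi n$ is indeed the only place where oddness of $n$ enters, and your derivation of it (continuity plus $\hat g(x+2\pi)=\hat g(x)+2\pi n$) is sound; in fact you never really need it beyond the labeling $\phi_{j+k}=\phi_j+\pi$, which already follows from the stated hypothesis on the fixed-point set.

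The paper takes a different route: it stays on the circle, assigns to each $I_j$ a covering multiplicity $n_j$ (how many times $g|_{I_j}$ covers $\partial\D\setminus I_j$), and rules out $n_j\ge 2$ by a Brouwer-type argument---one component of $g^{-1}(I_j)$ has closure in the interior of $I_j$, forcing an interior fixed point. For the attracting/neutral endpoint it argues by contradiction (both repelling would again yield an interior fixed point), and for the antipodal point when $n_j=1$ it observes that $g^{-1}(-I_j)$ sits compactly inside $I_j$. Your lift-to-$\R$ approach replaces these contraction/fixed-point arguments with direct applications of the intermediate value theorem to $\hat g(x)-x$, which is more elementary and makes the ``exactly once'' quantification immediate from the integer jump $p_{j+1}-p_j=1$. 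The paper's formulation, by contrast, packages the global degree count neatly as $n=2\sum_j n_j+1$, which it records in passing though does not use essentially. Both arguments are short; yours is perhaps cleaner for a reader comfortable with lifts.
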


\begin{proof}
With the hypotheses as above, either $g$ covers $\partial \D \setminus I_j$ with multiplicity $n_j \geq 1$ or $g(I_j) = I_j$ in which case we take $n_j =0$. With this set-up, we have
\begin{equation}
\label{eq:nj} 
n = \operatorname{deg}(g) = 2\sum_{j=1}^kn_j +1.
\end{equation}
Suppose that some $n_j \geq 2$. Then the pre-image $g^{-1}(I_j)$ consists of $n_j+1$ disjoint intervals, two of which have endpoints coinciding with the endpoints of $I_j$. Hence at least one of these pre-images, say $U$, does not have an endpoint coinciding with an endpoint of $I_j$. Then $g^{-1} : I_j \to U$ with $\overline{U}$ contained in the interior of $I_j$ and so there is a fixed point of $g$ contained in $U$. This contradicts the fact that there are no fixed points between $w_j$ and $w_{j+1}$ and so we conclude that $n_j$ can only be $0$ or $1$.

If $n_j=0$ then there are no points in $I_j$ which are switched under $\widetilde{H}$. If both endpoints of $I_j$ are repelling fixed points of $g$, then there exists an arc $V$ whose closure is contained in the interior of $I_j$ and so that $\overline{g(V)} \subset \operatorname{int} V$. Therefore $g$ must have another fixed point in the interior of $I_j$, which is a contradiction. Hence one of the endpoints is not repelling.

If $n_j=1$, then as above $U = g^{-1}(-I_j)$ is contained in the interior of $I_j$, where $-I_j = \{ -z :z\in I_j\}$. Therefore there exists at least one point $w$ in $I_j$ such that $g(w) = -w$, see Figure \ref{fig:2}. This completes the proof.

\begin{figure}[h]
\begin{center}
\includegraphics{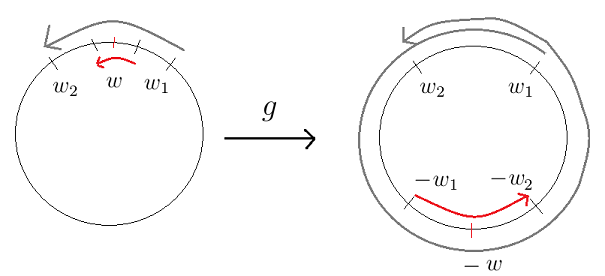}
\caption{The  case when $n_j=1$.}
\label{fig:2}
\end{center}
\end{figure}
\end{proof}

If $n$ is odd, then $\D \setminus \overline{\widetilde{\mathcal{E}_n}}$ consists of an even number of components which we denote by $C_j$ for $j=0,\ldots,n-2$ taken anticlockwise from the positive real axis. It follows from Theorem \ref{thm:F1} (vi)  that $C_j$ contains a ray from $\left ( \frac{n-1}{n+1} \right ) \exp [2i j\pi /(n-1)]$ to $\exp [2i j\pi /(n-1)]$, see Figure \ref{fig:6}.

\begin{figure}[h]
\begin{center}
\includegraphics{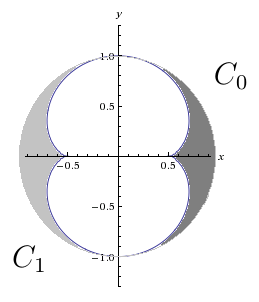}
\caption{The  case when $n=3$: $C_0$ is the darker region on the right and $C_1$ is the lighter region on the left.}
\label{fig:6}
\end{center}
\end{figure}

\begin{theorem}\label{thm:H2}
Let $n \geq 3$ be odd, $K\geq 1$, $\theta \in(-\pi/2,\pi/2]$ and let $\mu = e^{2i\theta} \left (\frac{K-1}{K+1} \right )$.  
\begin{enumerate}[(i)]
\item If $-\mu \in \widetilde{\mathcal{E}_n}$, then $H$ is elliptic and $H$ has $n-1$ fixed rays and $n-1$ switched rays.
\item If $-\mu$ is in the relative boundary of $\widetilde{\mathcal{E}_n}$ in $\D$, then $H$ is parabolic and there are two subcases:
\begin{enumerate}[(a)]
\item If $-\mu$ is on the boundary of $C_j$ for $j$ even, then the Denjoy-Wolff point of $B$ corresponds to a pair of rays fixed by $H$.
\item If $-\mu$ is on the boundary of $C_j$ for $j$ odd, then the Denjoy-Wolff point of $B$ corresponds to a pair of rays switched by $H$.
\end{enumerate}
\item If $-\mu \in \D \setminus \overline{\widetilde{\mathcal{E}_n}}$, then $H$ is hyperbolic and there are again two subcases:
\begin{enumerate}[(a)]
\item If $-\mu\in C_j$ for $j$ even, then the Denjoy-Wolff point of $B$ corresponds to a pair of rays fixed by $H$. There are $n+3$ fixed rays and $n-1$ switched rays of $H$.
\item If $-\mu\in C_j$ for $j$ odd, then the Denjoy-Wolff point of $B$ corresponds to a pair of rays switched by $H$. There are $n-1$ fixed rays and $n+3$ switched rays of $H$.
\end{enumerate}
\end{enumerate}
Further, if $n,\theta$ are fixed, there exists $K_{\theta}\in (1,\infty ]$ such that 
\begin{enumerate}[(i)]
\item for $1\leq K<K_{\theta}$, $H$ is elliptic and $H$ has $n-1$ fixed rays;
\item for $K=K_{\theta}$, $H$ is parabolic and $H$ has at most $n$ fixed rays;
\item for $K>K_{\theta}$, $H$ is hyperbolic and $H$ has either $n-1$ or $n+3$ fixed rays, depending on whether $-\mu \in C_j$ for $j$ odd or even.
\end{enumerate}
\end{theorem}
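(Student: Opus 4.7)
I would prove this by applying Theorem~\ref{thm:F1} to the associated Blaschke product $B(z) = \bigl((z+\mu)/(1+\bar\mu z)\bigr)^n$ coming from Theorem~\ref{thm:agree}, whose unique critical point in $\D$ is $-\mu$. This gives at once the classification of $H$ as elliptic, parabolic or hyperbolic in parts (i)--(iii), according as $-\mu$ lies in $\widetilde{\mathcal{E}_n}$, its relative boundary in $\D$, or $\D\setminus\overline{\widetilde{\mathcal{E}_n}}$. The threshold $K_\theta\in(1,\infty]$ then follows from the starlikeness of $\widetilde{\mathcal{E}_n}$ about $0$ in Theorem~\ref{thm:F1}(iii): fixing $\theta$ and letting $K$ vary over $[1,\infty)$ makes $-\mu = -e^{2i\theta}(K-1)/(K+1)$ trace the radial segment from $0$ to $-e^{2i\theta}\in\partial\D$, which by starlikeness exits $\widetilde{\mathcal{E}_n}$ at a unique radius.

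The key tool for the ray counts is the lift identity $\widehat{\widetilde{H}}(\phi) = \widehat{B}(2\phi)/2$ from the definition of $T(B)$. Sharpening Lemma~\ref{lem:fixpts2}, if $e^{i\psi}$ is a fixed point of $B$ on $\partial\D$ with lift level $\widehat{B}(\psi) = \psi + 2\pi\ell$, then setting $\phi_1 = \psi/2$ gives $\widehat{\widetilde{H}}(\phi_1)-\phi_1 = \pi\ell$, so the antipodal pair $\{\pm e^{i\phi_1}\}$ is fixed by $\widetilde{H}$ iff $\ell$ is even and switched iff $\ell$ is odd. The ray-counting problem reduces to tracking, with parity, the integer-level crossings of $\widehat{B}(\psi) - \psi$, which has net increase $2\pi(n-1)$ per period.

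In the elliptic case every boundary fixed point of $B$ is repelling (the Denjoy-Wolff point is interior), so all $n-1$ crossings of $\widehat{B}-\mathrm{id}$ are increasing and therefore occupy $n-1$ consecutive integer levels; since $n$ is odd these split evenly into $(n-1)/2$ even and $(n-1)/2$ odd, giving $n-1$ fixed rays and $n-1$ switched rays. In the hyperbolic case the attracting Denjoy-Wolff point supplies the unique decreasing crossing, at some level $\ell_*$; generically the resulting hump in $\widehat{B}-\mathrm{id}$ surrounds exactly one integer level, which must be $\ell_*$, so $\ell_*$ is crossed three times and each of the $n-2$ remaining consecutive levels once. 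Parity bookkeeping then gives $(n+3)/2$ fixed pairs and $(n-1)/2$ switched pairs when $\ell_*$ is even and the reverse when $\ell_*$ is odd, as required. The parabolic case is the confluent limit in which the hump collapses to a tangency at $\ell_*$.

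The main obstacle is the identification of the parity of $\ell_*$ in terms of the index $j$ of the component $C_j$ containing $-\mu$. Since $\ell_*$ is locally constant in $-\mu$, one evaluates it on a convenient representative in each $C_j$: taking $-\mu$ along the radial segment in $C_j$ described in Theorem~\ref{thm:F1}(vi) and deforming back to the base point $-\mu=0$, where $B(z) = z^n$ has the transparent lift $\psi \mapsto n\psi$ with fixed-point levels $0,1,\ldots,n-2$, tracks how the Denjoy-Wolff hump is positioned relative to the integer levels and pins down $\ell_*\bmod 2$ in terms of $j$. This matches the case distinction in parts (ii) and (iii) and, combined with the threshold, yields the final count statement.
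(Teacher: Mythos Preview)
Your overall strategy is sound and genuinely different from the paper's. You work with the lift $\widehat{B}-\mathrm{id}$ on $\R$ and count integer-level crossings with parity, whereas the paper works on the $\widetilde{H}$ side via Lemma~\ref{lem:Ij}: it partitions $\partial\D$ by the fixed points of $\widetilde{H}$ into arcs $I_j$, proves each arc has wrapping number $n_j\in\{0,1\}$, and reads off fixed/switched counts from the pattern of $n_j$'s together with \eqref{eq:nj}. Your parity criterion (fixed pair iff $\ell$ even, switched pair iff $\ell$ odd) is a clean reformulation of Lemma~\ref{lem:fixpts2} that the paper does not make explicit; conversely, Lemma~\ref{lem:Ij} gives a structural reason why the attracting pair creates exactly two $n_j=0$ intervals, which is the content you need but only assert.

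Two points in your write-up need tightening. First, in the hyperbolic case you write that ``generically'' the hump in $\widehat{B}-\mathrm{id}$ surrounds exactly one integer level. This is not generic but always true, and it is the crux of the count: the down-crossing at the Denjoy--Wolff point is flanked on each side (before the next fixed point) by a segment on which $\widehat{B}-\mathrm{id}$ returns to the \emph{same} level $2\pi\ell_*$, since any other integer level reached in between would force an additional fixed point. You should prove this (it is the $B$-side analogue of the $n_j\in\{0,1\}$ dichotomy in Lemma~\ref{lem:Ij}); leaving it as ``generic'' is a gap.

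Second, your identification of $\ell_*\bmod 2$ with the parity of $j$ by ``deforming back to $-\mu=0$'' is too vague: at $-\mu=0$ the map is elliptic and has no hump, so the deformation passes through the parabolic boundary where the hump is born, and you have not explained which level it is born at. The paper's route here is much more direct and worth adopting: by continuity it suffices to test one parameter in each $C_j$, so take $\theta=j\pi/(n-1)$ and $K>n$, check by hand that $e^{2ij\pi/(n-1)}$ is the Denjoy--Wolff point of $B$, and then compute $\arg H(e^{ij\pi/(n-1)}) = j\pi/(n-1) + j\pi$ directly from the polar form. This immediately shows the corresponding ray pair is fixed when $j$ is even and switched when $j$ is odd, with no deformation needed.
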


Note that in the parabolic case, $B$ may have less than $n$ fixed points, so $H$ will correspondingly have less fixed and switched rays. For example, $\left ( \frac{z-1/3}{1-z/3} \right )^2$ is parabolic and has only one fixed point on $\partial \D$ at $z=1$.

\begin{proof}[Proof of Theorem \ref{thm:H2}]
First, if $-\mu \in \widetilde{\mathcal{E}_n}$, then $B$ is elliptic and every fixed point on $\partial \D$ is repelling. By Lemma \ref{lem:Ij}, for every interval $I_j$ we are in the $n_j =1$ case. Hence there are $n-1$ fixed rays and $n-1$ switched rays of $H$.

Next, if $-\mu \in \D \setminus \overline{\widetilde{\mathcal{E}_n}}$, then $-\mu$ is in some component $C_j$. Since varying the parameters in $C_j$ moves fixed points of $B$ and fixed rays of $H$ continuously, it is enough to check what happens on the ray with argument $\exp [2i j\pi /(n-1)]$. 
To that end, let $K>n$, $\theta = j\pi/(n-1)$, recalling that the argument of $\mu$ is $2\theta$, and consider $B$ and $H$ with parameter $-\mu$. It is not hard to check that
\[ B(e^{2ij\pi /(n-1)}) = \left ( \frac{ e^{2ij\pi /(n-1)}(1+(K-1)/(K+1)) }{1+(K-1)/(K+1)} \right )^n = e^{2ijn\pi /(n-1)} = e^{2ij\pi /(n-1)},\]
and so $e^{2ij\pi /(n-1)}$ is fixed by $B$. Further,
\[B'(e^{2ij\pi /(n-1)}) =\frac{n}{K} <1,\]
and so this is the Denjoy-Wolff point of $B$. The next question is whether the corresponding rays of $H$, with argument $j\pi/(n-1)$ and $j\pi/(n-1)+\pi$ are fixed or switched by $H$. We have
\begin{align*} 
\arg H(e^{ij\pi /(n-1)}) &= \arg [ h_{K,\theta}(e^{ij\pi /(n-1)}) ]^n \\
&= \frac{ nj \pi}{n-1}\\
&= \frac{j\pi}{n-1} + j\pi.
\end{align*}
Therefore if $j$ is even, the rays are fixed and if $j$ is odd, the rays are switched.

Suppose $j$ is even and these rays are fixed. Then applying Lemma \ref{lem:Ij} to $\widetilde{H}$, we see that $\widetilde{H}$ has a pair of attracting fixed points and the rest are repelling and so there are two intervals where $n_j =0$, and for the others we must have $n_j =1$. Hence by Lemma \ref{lem:fixpts2} and using the fact $B$ has $n+1$ fixed points in this case, $\widetilde{H}$ has $n+3$ fixed points and $n-1$ switched points.

Similarly, if $j$ is odd, then $\widetilde{H}$ has $n-1$ fixed points and $n+3$ switched points.
The parabolic case is similar and so we omit the proof.
\end{proof}

\begin{example}
To illustrate the case when $n$ is odd, consider the example $H(z) = [h_{K,\theta}(z)]^3$, where first $\theta =0$ and $K>3$. Then there are $6$ fixed points of $\widetilde{H}$, including the two attracting fixed points $\pm 1$ arising from the Denjoy-Wolff point $z=1$ of $B$. The immediate attracting domains are bounded by the other pairs of fixed points. The points $\pm i$ are switched by $\widetilde{H}$. 

On the other hand, if $\theta = \pi/2$ and $K>3$, then now $\pm i$ are the points arising from the Denjoy-Wolff point $z=-1$ of $B$. They are still switched by $\widetilde{H}$ and now the immediate attracting region is bounded by pairs of points which are also switched. This means that $\pm 1$ are the only fixed points.
\end{example}

\section{Attracting and repelling fixed rays}

\subsection{Density of pre-images}

We may classify fixed rays of $H$ as attracting, repelling or neutral depending on whether the corresponding fixed point of $B$ is attracting, repelling or neutral. This classification also holds for opposite rays that are switched by $H$.

\begin{definition}
Suppose that $-\mu \notin \widetilde{\mathcal{M}_n}$. Then the corresponding Blaschke product $B$ from \eqref{eq:B} has a Denjoy-Wolff point $z_0 \in \partial \D$. If $n$ is even, define $R_0$ to be the corresponding fixed ray with argument $\phi_0$ and denote by $\Lambda$ the basin of attraction of $R_0$, that is,
\[ \Lambda = \{z\in \C : \arg (H^n(z)) \to \phi_0)\} .\] 
In this case, we will call $R_0$ the Denjoy-Wolff ray of $H$.

If $n$ is odd, then the Denjoy-Wolff point of $B$ corresponds to a pair of opposite rays $R_0,R_1$ with arguments $\phi_0$ and $\phi_0 + \pi$ which are either both fixed or both swapped by $H$. In this case, the basin of attraction $\Lambda$ is
\[ \Lambda = \{ z\in \C : \arg (H^{2n}(z)) \to \phi_0 \} \cup \{ z\in \C : \arg (H^{2n}(z)) \to \phi_0+\pi \}.\]
\end{definition}

The immediate basin of attraction $\Lambda_0$ is the component of $\Lambda$ that contains $R_0$ in even degree case or the two components of $\Lambda$ that contain $R_0$ and $R_1$ in the odd degree case.
Recall the connectedness locus $\mathcal{M}_n$ in parameter space of unicritical Blaschke products, and that the Blaschke product of form \eqref{eq:B} has parameter $-\mu$.

\begin{theorem}
\label{thm:basin}
Let $K>1$, $\theta \in (-\pi/2,\pi/2]$, $n\geq 2$ and $\mu = e^{2i\theta}\left ( \frac{K-1}{K+1} \right )$. If $H=H_{K,\theta,n}$
and $-\mu \in \mathcal{M}_n$, then for any ray $R_{\phi}$, $\{ H^{-k}(R_{\phi}) \} _{k=0}^{\infty}$ is dense in $\C$. On the other hand, if $-\mu \notin \mathcal{M}_n$, then $\Lambda$ is dense in $\C$, where $\Lambda$ is the basin of attraction defined above.
\end{theorem}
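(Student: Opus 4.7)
I would reduce the entire statement to a density question on $\partial\D$ via the induced circle endomorphism $\widetilde{H}$. Since $H$ sends rays to rays, for each $k\geq 0$ the pre-image $H^{-k}(R_\phi)$ is a union of rays whose arguments form $\widetilde{H}^{-k}(e^{i\phi})$, and a family of rays is dense in $\C$ iff the set of its arguments is dense in $[0,2\pi)$. Likewise $\Lambda$ is dense in $\C$ iff its argument set is dense in $\partial\D$. The unifying tool is the semi-conjugacy
\[
p\circ\widetilde{H}=B\circ p,\qquad p(z)=z^{2},
\]
which is immediate from Theorem~\ref{thm:agree} combined with the identity $\widehat{T(B)}(x)=\widehat{B}(2x)/2$ of Definition~\ref{def:tg}.

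\textbf{Case $-\mu\in\mathcal{M}_n$.} By Theorem~\ref{thm:F1}(iv) and the CDP classification this is precisely the case $J(B)=\partial\D$. Standard Fatou--Julia theory then gives that $B|_{\partial\D}$ is topologically exact: for every open arc $I\subset\partial\D$ there is $k_0$ with $B^{k}(I)=\partial\D$ for $k\geq k_0$. I would transfer this to $\widetilde{H}$ as follows. Given an open arc $I\subset\partial\D$, the image $p(I)$ is open, so $B^{k}(p(I))=\partial\D$ for large $k$, and hence $p(\widetilde{H}^{k}(I))=\partial\D$. Because $\widetilde{H}^{k}(I)$ is connected it is either all of $\partial\D$ or an arc, and in the latter case the surjectivity of $p$ forces its length to be at least $\pi$. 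One more application of the monotone degree-$n\geq 2$ map $\widetilde{H}$ then stretches this arc to total variation $\geq n\pi\geq 2\pi$, giving $\widetilde{H}^{k+1}(I)=\partial\D$. In particular $e^{i\phi}\in\widetilde{H}^{k+1}(I)$, so $I$ meets $\widetilde{H}^{-(k+1)}(e^{i\phi})$, and since $I$ was arbitrary $\bigcup_{j\geq 0}\widetilde{H}^{-j}(e^{i\phi})$ is dense in $\partial\D$.

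\textbf{Case $-\mu\notin\mathcal{M}_n$.} Here $J(B)$ is a Cantor subset of $\partial\D$, so the Fatou set $\mathcal{F}:=\partial\D\setminus J(B)$ is open and dense, and every point of $\mathcal{F}$ has its $B$-orbit converging to the Denjoy--Wolff point $z_0\in\partial\D$. By Lemmas~\ref{lem:fixpts1} and~\ref{lem:fixpts2}, $z_0=e^{2i\phi_0}$, the common square of the Denjoy--Wolff ray angle(s) $\phi_0$ (and, for odd $n$, $\phi_0+\pi$). Then $p^{-1}(\mathcal{F})$ is open and dense in $\partial\D$; for any $e^{i\psi}\in p^{-1}(\mathcal{F})$ the relation $[\widetilde{H}^k(e^{i\psi})]^2=B^k(e^{2i\psi})\to z_0$ shows that $\widetilde{H}^k(e^{i\psi})$ accumulates only on $\{e^{i\phi_0},-e^{i\phi_0}\}$. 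Combined with Theorems~\ref{thm:H1} and~\ref{thm:H2}, which describe whether this pair is fixed or switched depending on the parity of $n$ (and in the odd case on which component $C_j$ contains $-\mu$), this is exactly the condition that $e^{i\psi}$ lies in the argument set of $\Lambda$. Hence that argument set contains $p^{-1}(\mathcal{F})$, is dense in $\partial\D$, and $\Lambda$ is dense in $\C$.

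\textbf{Main obstacle.} The delicate step is the topological-exactness transfer in the first case: the identity $p(\widetilde{H}^k(I))=\partial\D$ does not by itself force $\widetilde{H}^k(I)=\partial\D$, since $\widetilde{H}^k(I)$ could a priori be a proper arc whose image under the 2-to-1 map $p$ still wraps around. Using connectedness of $\widetilde{H}^k(I)$ to deduce its length is at least $\pi$, and then a single further iterate of the degree-$n\geq 2$ monotone map $\widetilde{H}$ to push this over the whole circle, is the crucial bridge.
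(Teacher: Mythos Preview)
Your approach is correct and essentially the same as the paper's: the paper packages the semi-conjugacy transfer into a general lemma $J(T(g))=S^{-1}(J(g))$ (Lemma~\ref{lem:rescale}) together with the observation that backward orbits accumulate on $J(g)$ (Lemma~\ref{lem:bwards}), whereas you inline both arguments directly. The ``main obstacle'' you flag is exactly the heart of the paper's proof of Lemma~\ref{lem:rescale}.

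One justification to tighten: the step ``a single further iterate of the degree-$n\geq 2$ monotone map $\widetilde{H}$ stretches an arc of length $\pi$ to total variation $\geq n\pi$'' does \emph{not} follow from monotonicity and degree alone (a degree-$2$ circle map can have lift with slope $1$ on $[0,\pi]$ and slope $3$ on $[\pi,2\pi]$). What makes it true here is the half-period identity $\widehat{T(B)}(x+\pi)=\widehat{T(B)}(x)+n\pi$, immediate from Definition~\ref{def:tg}; with this, any arc of length $\pi$ is indeed sent onto all of $\partial\D$. The paper's Lemma~\ref{lem:rescale} uses the same step with the same (implicit) reason.
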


This theorem can be interpreted as saying either the backward orbit of a ray is dense, or the backward orbit of $\Lambda_0$ is dense depending on whether or not $-\mu \in \mathcal{M}_n$. To prove this we first need a result on circle endomorphisms.

\subsection{Relating the dynamics of a circle endomorphism $g$ to that of $T(g)$}

We need to study how the dynamics of the Blaschke product $B$ on $\partial \D$ and the dynamics of $\widetilde{H}$ are related. By Theorem \ref{thm:agree}, $\widetilde{H} = T(B)$ and so if $S(z) = z^2$ then we have the functional equation
$S\circ B = \widetilde{H} \circ S$.

\begin{definition}
Let $g:\partial \D \to \partial \D$ be a degree $m$ endomorphism. For such a map, define 
$J(g)$ to be the set of $z\in \partial \D$ such that for all neighbourhoods $U$ of $z$, there exists $N\in \N$ such that $g^N(U) = \partial \D$. Further, define $F(g)$ to be the complement of $J(g)$ in $\partial \D$, that is, the set of $z\in \partial \D$ such that there exists a neighbourhood $U$ of $z$ such that for all $N\in \N$, $g^N(U)$ omits an exceptional set $E$ containing at least one point. 
\end{definition}

Clearly, the exceptional set $E$ contains $J(g)$, as long as $J(g)$ is non-empty. 
If $g$ is the restriction of a finite Blaschke product to $\partial \D$, then $J(g)$ and $F(g)$ are the Julia set and the Fatou set restricted to $\partial \D$ respectively.

\begin{lemma}
\label{lem:rescale}
We have $J(T(g)) =  S^{-1}(J(g))$.
\end{lemma}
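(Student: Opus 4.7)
The plan is to exploit the semi-conjugacy $S \circ T(g) = g \circ S$, which follows immediately from Definition \ref{def:tg}: since the lift satisfies $2\widehat{T(g)}(x) = \widehat{g}(2x)$, one has $[T(g)(z)]^2 = g(z^2)$ for every $z \in \partial \D$. Iterating this relation gives
\[ S \circ T(g)^N = g^N \circ S \]
for all $N \in \N$, and I would then establish the two set inclusions separately.

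For the inclusion $J(T(g)) \subseteq S^{-1}(J(g))$: given $z \in J(T(g))$ and any neighbourhood $V$ of $S(z)$ in $\partial \D$, the map $S$ is a local homeomorphism on $\partial \D$, so there exists a neighbourhood $U$ of $z$ with $S(U) \subseteq V$. Picking $N$ with $T(g)^N(U) = \partial \D$, applying $S$ and using the iterated semi-conjugacy yields $\partial \D = S(T(g)^N(U)) = g^N(S(U)) \subseteq g^N(V)$, so $S(z) \in J(g)$.

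For the reverse inclusion $S^{-1}(J(g)) \subseteq J(T(g))$: fix $z$ with $S(z) \in J(g)$ and an arc neighbourhood $U$ of $z$. Then $S(U)$ is a neighbourhood of $S(z)$, so some $N$ satisfies $g^N(S(U)) = \partial \D$, whence $S(T(g)^N(U)) = \partial \D$. Now $T(g)^N(U)$ is connected, so it is either all of $\partial \D$ (in which case we are done) or a proper arc $I$ whose image under the $2$-to-$1$ covering map $S$ is the whole circle. In the latter case, $I$ must have length at least $\pi$.

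The main obstacle, and the real crux of the argument, is showing that one further application of $T(g)$ then maps this arc onto $\partial \D$. I would verify this by lifting $I$ to an interval $[a, a+\ell] \subseteq \R$ with $\ell \geq \pi$ and computing
\[ \widehat{T(g)}(a+\pi) - \widehat{T(g)}(a) = \frac{\widehat{g}(2a+2\pi) - \widehat{g}(2a)}{2} = \pi n. \]
The intermediate value theorem then shows that the image $\widehat{T(g)}([a, a+\pi])$ is an interval of length at least $\pi n \geq 2\pi$, so it projects onto all of $\partial \D$. Therefore $T(g)^{N+1}(U) \supseteq T(g)(I) = \partial \D$, giving $z \in J(T(g))$ and completing the proof.
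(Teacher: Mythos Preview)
Your proof is correct and follows essentially the same route as the paper: both arguments rest on the semi-conjugacy $S\circ T(g)^N = g^N\circ S$ and on the observation that an arc of length at least $\pi$ is mapped onto all of $\partial\D$ by one further application of $T(g)$. The only structural difference is that for the inclusion $J(T(g))\subseteq S^{-1}(J(g))$ you argue directly, whereas the paper proves the equivalent statement $S^{-1}(F(g))\subseteq F(T(g))$ via complements; your version is slightly more streamlined, and your explicit lift computation $\widehat{T(g)}(a+\pi)-\widehat{T(g)}(a)=\pi n$ supplies a justification the paper leaves implicit.
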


\begin{proof}
First suppose that $x\in J(g)$. Then given any neighbourhood $U$ of $x$, there exists $N\in \N$ such that $g^N(U) = \partial \D$.
Let $y\in S^{-1}(x)$ and find a neighbourhood $V$ of $y$ so that $V$ contains the component of $S^{-1}(U)$ containing $x$. Then $g^N(S(V)) \supset g^N(U) =\partial \D$. Using the functional equation, this means that $S([T(g)]^N(V)) = \partial \D$. Since $[T(g)]^N(V)$ is an arc, this means that $[T(g)]^N(V)$ contains an arc of length $\pi$. Hence $[T(g)]^{N+1}(V) = \partial \D$ and so $y \in J(T(g))$.

On the other hand, suppose that $x\in F(g)$. Then there exists a neighbourhood $U$ of $x$ so that for every $N\in \N$, $g^N(U)$ omits an exceptional set $E$. Let $y \in S^{-1}(x)$ and find a neighbourhood $V$ of $y$ so that $V$ is contained in the component of $S^{-1}(U)$ containing $x$. Then $g^N(S(V)) \subset f^N(U) \subset \partial \D \setminus E$. Again using the functional equation, this means that $S([T(g)]^N(V)) \subset \partial \D \setminus E$ and hence $[T(g)]^N(V) \subset S^{-1}( \partial \D\setminus E)$, which contains at least two points. Since this is true for every $N$, $y\in F(T(g))$.
\end{proof}

Denote by $O_g^-(z)$ the backward orbit of $z$ with respect to $g$.

\begin{lemma}
\label{lem:bwards}
If $z_0 \in \partial \D$, then $J(g) \subset \overline{O_g^-(z_0)}$.
\end{lemma}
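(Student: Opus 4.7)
The plan is to invoke the defining ``blowing-up'' property of $J(g)$ directly against the backward orbit of the fixed reference point $z_0$. This is the circle-endomorphism analogue of the classical statement that the Julia set is contained in the closure of the backward orbit of any point, and the argument is essentially immediate from the definition.

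Concretely, I would fix an arbitrary $x \in J(g)$ and an arbitrary neighbourhood $U$ of $x$ in $\partial \D$, and show that $U \cap O_g^{-}(z_0) \neq \emptyset$. By the definition of $J(g)$, there exists $N = N(U) \in \N$ with $g^{N}(U) = \partial \D$. In particular, the point $z_0 \in \partial \D$ lies in $g^{N}(U)$, so there is some $y \in U$ with $g^{N}(y) = z_0$. Such a $y$ is by definition an element of $O_g^{-}(z_0)$, and hence $y \in U \cap O_g^{-}(z_0)$. Since $U$ was arbitrary, every neighbourhood of $x$ meets $O_g^{-}(z_0)$, which is precisely the statement that $x \in \overline{O_g^{-}(z_0)}$.

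Since this holds for every $x \in J(g)$, we conclude $J(g) \subset \overline{O_g^{-}(z_0)}$, as required. There is no real obstacle: the only thing to be slightly careful about is that the definition of $J(g)$ given earlier requires $g^{N}(U)$ to be \emph{all} of $\partial \D$ (not merely a dense subset), which is exactly what is needed to guarantee that $z_0$ itself, rather than just nearby points, lies in $g^{N}(U)$. No appeal to an exceptional set or to the structure of $g$ as a Blaschke product restriction is needed; the argument works for any degree $m$ circle endomorphism in the sense of the preceding definition.
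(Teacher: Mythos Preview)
Your proof is correct and essentially identical to the paper's: both fix a point of $J(g)$, take an arbitrary neighbourhood $U$, use the definition of $J(g)$ to obtain $N$ with $g^N(U)=\partial\D$, and then pick a preimage of $z_0$ in $U$. The only difference is that you spell out the closure argument and add some commentary; the mathematical content is the same.
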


\begin{proof}
Let $z_0 \in \partial \D$, $z_1 \in J(g)$ and $U$ be any neighbourhood of $z_1$. Then there exists $N\in \N$ such that $g^N(U) = \partial \D$. In particular, it follows that there exists $z_2 \in U$ with $g^N(z_2) = z_0$. This proves the lemma.
\end{proof}

We can now prove Theorem \ref{thm:basin}.

\begin{proof}[Proof of Theorem \ref{thm:basin}]
First suppose that $-\mu \in \mathcal{M}_n$. Then by definition $J(B) = \partial \D$. Since $\widetilde{H} = T(B)$, then Lemma \ref{lem:rescale} implies that $J(\widetilde{H}) = \partial \D$. Further, Lemma \ref{lem:bwards} implies that if $z_0 \in \partial \D$ then $O_{\widetilde{H}}^-(z_0)$ is dense in $\partial \D$. Interpreting this in terms of $H$, the backward orbit of any ray $R_{\phi}$ under $H$ is dense in $\C$.

Next, if $-\mu \notin \mathcal{M}_n$, then $J(B)$ is a Cantor subset of $\partial \D$ and there is a Denjoy-Wolff point $z_0 \in \partial \D$ so that if $z \in F(B)$ then $B^m(z) \to z_0$. 
Lemma \ref{lem:rescale} implies that $J(\widetilde{H})$ is also a Cantor subset of $\partial \D$ and so $F(\widetilde{H})$ is dense in $\partial \D$. The Denjoy-Wolff point $z_0$ of $B$ corresponds to either a single fixed ray or a pair of rays that are either fixed or switched by $\widetilde{H}$, as discussed above. Interpreting this in terms of $H$, the basin of attraction $\Lambda$ is dense in $\C$.
\end{proof}

\section{Decomposition of the plane}

\subsection{Attracting basins and their boundary}

The dynamics of $H$ break up the plane into three dynamically interesting sets.

\begin{theorem}
\label{thm:sets}
Let $H$ be as in \eqref{eq:H}. Then the attracting basin of $0$, $\mathcal{A}(0)$, is star-like about $0$ and we may write
\[ \C = \mathcal{A}(0) \cup \partial I(H) \cup I(H),\]
where $I(H)$ denotes the escaping set.
In other words, the attracting basins of $0$ and $\infty$ respectively form two completely invariant domains with boundary $\partial I(H)$ a Jordan curve.
\end{theorem}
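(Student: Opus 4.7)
The backbone of the argument is the scaling identity
\[
H(tz) = t^n\, H(z), \qquad t>0,\ z \in \C,
\]
which is immediate from the $\R$-linearity of $h$. Iterating it yields the fundamental relation
\[
H^k(re^{i\phi}) = r^{n^k}\, H^k(e^{i\phi}), \qquad k \geq 0,\ r>0,\ \phi \in \R,
\]
so everything is controlled by the orbit of the unit circle. Writing $H^k(e^{i\phi}) = Q_k(\phi)\, e^{i\psi_k(\phi)}$ with $\psi_k = \arg \widetilde{H}^k(e^{i\phi})$ and applying Proposition \ref{prop:hform} with $C(\phi) := (1+(K^2-1)\cos^2(\phi-\theta))^{n/2} \in [1, K^n]$, the moduli satisfy the recurrence $Q_{k+1} = Q_k^n\, C(\psi_k)$ with $Q_0 = 1$. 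Solving and dividing by $n^k$,
\[
n^{-k} \log Q_k(\phi) \;=\; \sum_{j=0}^{k-1} n^{-j-1}\, \log C(\psi_j(\phi)).
\]

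Because $0 \leq \log C \leq n \log K$ uniformly on $\partial\D$, the Weierstrass $M$-test gives absolute and uniform convergence of the series, so
\[
\ell(\phi) \;:=\; \sum_{j=0}^{\infty} n^{-j-1}\, \log C(\psi_j(\phi))
\]
is a continuous function on $\partial\D$ taking values in $[0,\tfrac{n}{n-1}\log K]$. Define
\[
\rho(\phi) \;:=\; e^{-\ell(\phi)} \;\in\; \bigl[K^{-n/(n-1)},\, 1\bigr].
\]
Since $\log |H^k(re^{i\phi})| = n^k \log r + \log Q_k(\phi)$, dividing by $n^k$ and letting $k \to \infty$ shows that $|H^k(re^{i\phi})| \to 0$ whenever $r < \rho(\phi)$ and $|H^k(re^{i\phi})| \to \infty$ whenever $r > \rho(\phi)$. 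Consequently
\[
\mathcal{A}(0) \;=\; \{re^{i\phi} : 0 \leq r < \rho(\phi)\}, \qquad I(H) \;=\; \{re^{i\phi} : r > \rho(\phi)\},
\]
and the plane is partitioned as $\C = \mathcal{A}(0) \cup \gamma \cup I(H)$, where $\gamma := \{\rho(\phi)\,e^{i\phi} : \phi \in [0,2\pi)\}$.

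Star-likeness of $\mathcal{A}(0)$ about $0$ reads off the description: if $re^{i\phi} \in \mathcal{A}(0)$, then for every $t \in [0,1]$ we have $|tre^{i\phi}| = tr < r < \rho(\phi)$, so $tre^{i\phi} \in \mathcal{A}(0)$. Continuity and strict positivity of $\rho$ make $\gamma$ a Jordan curve enclosing $0$; then $\mathcal{A}(0)$ is the bounded component of $\C \setminus \gamma$ and $I(H)$ is the unbounded component, so both are domains, and $\gamma = \partial \mathcal{A}(0) = \partial I(H)$. Complete invariance of $\mathcal{A}(0)$ and $I(H)$ is automatic from their characterizations as attracting basins of the fixed points $0$ and $\infty$ of $H$.

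The only place where something nontrivial happens is the passage from the pointwise recurrence $Q_{k+1}=Q_k^n C(\psi_k)$ to continuity of $\rho$; but the factor $n^{-j-1}$ in the rescaled sum, together with the crude bound $\log C \leq n\log K$, makes this routine. The true content of the theorem is the homogeneity $H(tz) = t^n H(z)$, which collapses a seemingly two-dimensional dynamical question into a one-dimensional one along rays.
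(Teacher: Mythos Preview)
Your proof is correct and takes a genuinely different route from the paper's. The paper argues indirectly: it quotes \cite{FG} for the structure of $I(H)$, analyses the decomposition on a single fixed ray $R_\phi$, and then invokes the Blaschke-product machinery (Theorem~\ref{thm:basin}) to transfer this decomposition to a dense family of rays, splitting into the cases $-\mu\in\mathcal{M}_n$ and $-\mu\notin\mathcal{M}_n$ (and, in the latter, even versus odd degree). Your argument bypasses all of that by exploiting the homogeneity $H(tz)=t^nH(z)$ directly: the telescoping series $\ell(\phi)=\sum_{j\ge 0} n^{-j-1}\log C(\psi_j(\phi))$ converges uniformly by the $M$-test, which in one stroke yields an \emph{explicit} continuous formula $\rho(\phi)=e^{-\ell(\phi)}$ for the boundary curve and hence its Jordan property, with no case analysis and no reference to $B$ or $\mathcal{M}_n$. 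What you gain is a self-contained, constructive proof that even produces quantitative bounds $K^{-n/(n-1)}\le\rho\le 1$; what the paper's route buys is coherence with the surrounding narrative, since the density results of Theorem~\ref{thm:basin} are needed elsewhere. One small point you leave implicit: the equalities $\mathcal{A}(0)=\{r<\rho(\phi)\}$ and $I(H)=\{r>\rho(\phi)\}$ (rather than mere inclusions) require knowing that both basins are open, which follows from $0$ and $\infty$ being locally attracting; it would do no harm to say this in a sentence.
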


\begin{proof}[Proof of Theorem \ref{thm:sets}]

Fix $K>1$, $\theta \in (-\pi/2, \pi/2]$, $n\geq 2$ and let $H=H_{K,\theta,n}$ be defined by \eqref{eq:H}. By \cite[Theorem 4.3]{FG}, since $H$ is a composition of a bi-Lipschitz map and a polynomial, the escaping set $I(H)$ is a connected, completely invariant, open neighbourhood of infinity and $\partial I(H)$ is a completely invariant closed set. It is clear that $0$ is a topologically attracting fixed point of $H$ and so the basin of attraction $\mathcal{A}(0)$ is completely invariant and open.

Let $R_{\phi}$ be a fixed ray of $H$. Then on $R_{\phi}$, we have
\[ H(re^{i\phi}) = \alpha r^n e^{i\phi},\]
where $\alpha =( 1+ (K^2-1)\cos^2(\phi - \theta) )^{n/2}$ by the polar form \eqref{eq:polarH} of $H$. For $r=r_{\phi}:=\alpha^{\frac{1}{1-n}}$, this point is fixed, for $r>r_{\phi}$ the point is in $I(H)$ and for $r<r_{\phi}$, the point is in $\mathcal{A}(0)$. By complete invariance, any pre-image of $R_{\phi}$ breaks up into $\mathcal{A}(0), I(H)$ and $\partial I(H)$ in the same way.

Suppose that $-\mu \in \mathcal{M}_n$ so that $J(B) = \partial \D$. Then by Theorem \ref{thm:basin}, if $R_{\phi}$ is any fixed ray of $H$, its pre-images under $H$ are dense in $\C$. Since $\mathcal{A}(0)$ and $I(H)$ are open, this proves the result in this case.

Next, if $-\mu \notin \mathcal{M}_n$, then $J(B)$ is a Cantor subset of $\partial \D$ and $F(B) \cap \partial \D$ is dense in $\partial \D$. By Theorem \ref{thm:basin}, the basin of attraction $\Lambda$ is dense in $\C$. Suppose first that $n$ is even and that $R_{\varphi} \in \Lambda$. Then $H^m(R_{\varphi}) \to R_{\phi}$ where $R_{\phi}$ is the attracting fixed ray. Since $\mathcal{A}(0)$ and $I(H)$ are open, it is not hard to see that $R_{\varphi}$ decomposes in the same way that $R_{\phi}$ does. Since $\Lambda$ is dense in $\C$, the openness of $\mathcal{A}(0)$ and $I(H)$ again imply the result in this case.

The case where $n$ is odd and the Denjoy-Wolff point of $B$ corresponds to a pair of rays $R_{\phi},R_{\phi + \pi}$ which are either fixed or switched follows similarly by considering $H^{2m}(R_{\varphi})$. This sequence converges to $R_{\phi}$ or $R_{\phi + \pi}$ and we then proceed as above.
\end{proof}

It would be interesting to know the regularity of $\partial I(H)$. For example, is it a quasi-circle? See the computer pictures generated by Doug Macclure in Figures \ref{fig:3} and \ref{fig:4} for examples.

\begin{figure}[h]
\begin{center}
\includegraphics[width = 5in]{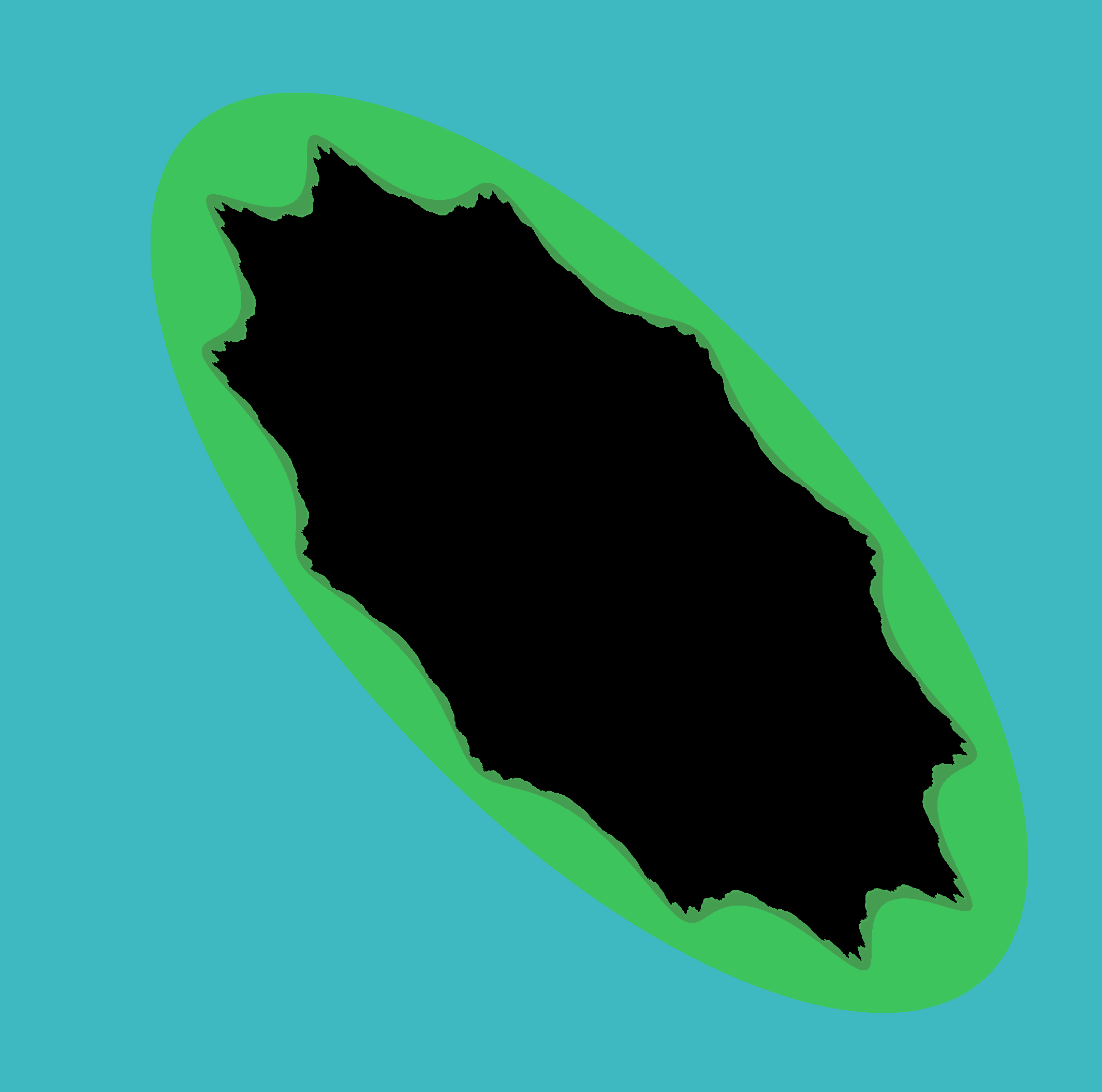}
\caption{The dynamics of $H$ with $K=2.25$, $\theta = 0.75$ and $n=6$.}
\label{fig:3}
\end{center}
\end{figure}

\begin{figure}[h]
\begin{center}
\includegraphics[width = 5in]{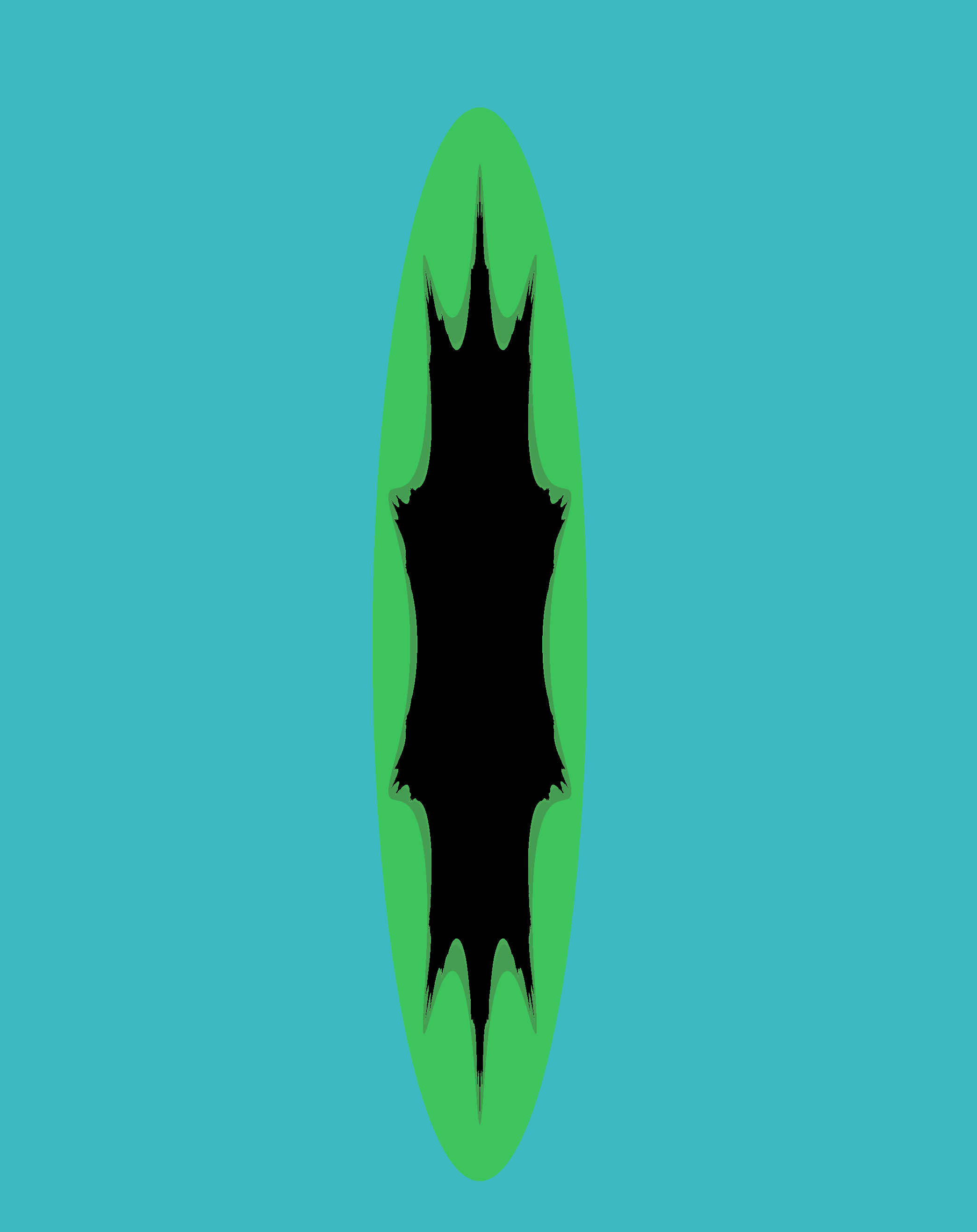}
\caption{The dynamics of $H$ with $K=5$, $\theta = 0$ and $n=5$.}
\label{fig:4}
\end{center}
\end{figure}

\subsection{Julia sets}

In \cite{B2}, the Julia set for quasiregular mappings of polynomial type is defined, but only when the degree is larger than the distortion. For such mappings, $J(f)$ is defined by
\[ J(f) = \{ x\in \R^n : \R^n \setminus O^+(U) \text{ has capacity zero, for all neighbourhoods } U \text{ of } x \},\]
where $O^+(U)$ denotes the forward orbit of the set $U$. We omit the definition of capacity zero here, but remark that such sets must be necessarily of Hausdorff dimension zero \cite[Corollary VII.1.16]{Rickman}. We next show that all such mappings $H$ to which this definition of the Julia set applies are elliptic.

\begin{corollary}
\label{cor:F}
Let $n\geq 2$ and $\theta \in(-\pi/2,\pi/2]$. Then $K_{\theta}$ defined in Theorems \ref{thm:H1} and \ref{thm:H2} satisfies $K_{\theta}\geq n$. 
\end{corollary}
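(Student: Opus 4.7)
The plan is to reduce the statement to Theorem \ref{thm:F1}(iii), which says that $\widetilde{\mathcal{E}_n}$ contains the open disk $\{w \in \D : |w| < \tfrac{n-1}{n+1}\}$ around $0$. Since $K_\theta$ is precisely the threshold above which $H$ fails to be elliptic, and by Theorems \ref{thm:H1} and \ref{thm:H2} ellipticity of $H$ is equivalent to $-\mu \in \widetilde{\mathcal{E}_n}$, it suffices to show that $-\mu \in \widetilde{\mathcal{E}_n}$ whenever $K < n$, independent of $\theta$.

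First I would observe that $|{-\mu}| = |\mu| = \frac{K-1}{K+1}$, so the condition $|{-\mu}| < \frac{n-1}{n+1}$ is equivalent (by rearranging and cross-multiplying, since both denominators are positive) to $K < n$. Thus for every $\theta$ and every $K \in [1, n)$, the parameter $-\mu$ lies in the open disk of radius $\frac{n-1}{n+1}$, which by Theorem \ref{thm:F1}(iii) is contained in $\widetilde{\mathcal{E}_n}$.

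Consequently, by Theorems \ref{thm:H1} and \ref{thm:H2}, $H_{K,\theta,n}$ is elliptic for every $K \in [1, n)$. Since the definition of $K_\theta$ requires $H$ to be elliptic precisely on $[1, K_\theta)$, this forces $K_\theta \geq n$, completing the proof. The argument is essentially a one-line computation after invoking Theorem \ref{thm:F1}(iii); there is no genuine obstacle, which is why the result is stated as a corollary.
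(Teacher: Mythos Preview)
Your proof is correct and follows exactly the same approach as the paper: invoke Theorem~\ref{thm:F1}(iii) to get that $\widetilde{\mathcal{E}_n}$ contains the disk of radius $\tfrac{n-1}{n+1}$, note that $|\mu| = \tfrac{K-1}{K+1}$, and observe that $\tfrac{K-1}{K+1} < \tfrac{n-1}{n+1}$ is equivalent to $K<n$. The paper's proof is simply a terser version of what you wrote.
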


\begin{proof}
This follows immediately from Theorem \ref{thm:F1} (iii), that $\widetilde{\mathcal{E}_n}$ contains an open disk of radius $\frac{n-1}{n+1}$, and the fact that $| \mu | = \frac{K-1}{K+1}$.
\end{proof}

This shows that the Julia set definition can only apply when $H$ is elliptic. It follows fairly straightforwardly that in fact if $-\mu \in \widetilde{\mathcal{M}_n}$ then all points on $\partial I(H)$ have the blowing-up property of the Julia set and by Theorem \ref{thm:sets} these are the only such points.

On the other hand, if $-\mu \notin \widetilde{\mathcal{M}_n}$, then there are points on $\partial I(H)$ without the blowing-up property in the Julia set definition. In fact, the only points with the blowing-up property are those on $\partial I(H)$ that arise from the Julia set of $B$, which we recall is a Cantor set. This gives another class of examples where the boundary of the escaping set and the set with a blowing-up property do not agree, c.f. \cite{B1}.

\section{Unbounded distortion of the iterates}

\subsection{Nowhere uniform quasiregularity}

The dynamics of mappings of the form $H$ are only of independent interest if the distortion of the iterates is unbounded. This is because every uniformly quasiregular mapping of the plane is a quasiconformal conjugate of a holomorphic mapping. This means that the iteration of uniformly quasiregular mappings of the plane yields nothing new compared to complex dynamics. We will next show that mappings of the form $H$ satisfy a condition that is slightly stronger than not being uniformly quasiregular. We recall the following definition from \cite{FF}.

\begin{definition}
\label{nuqr}
Given a plane domain $U$, a quasiregular mapping $f:U \to \C$ is called \emph{nowhere uniformly quasiregular} if for every
$z\in U$, we have $K_z(f^m)$ is unbounded as $m\to \infty$, where
\[ K_z(f) = \inf \max \{ K_f(w):w \in U \},\]
where $K_{f}(w)$ denotes the distortion of $f$ at $w$ and the infimum is taken over all neighbourhoods $U$ of $z$.
\end{definition}

For example, the quasiconformal mapping $f(x+iy) = Kx+iy$ is easily seen to be nowhere uniformly quasiregular for any $K>1$.

\begin{theorem}\label{s2t3}
Let $n\geq 2$, $\theta \in(-\pi/2,\pi/2]$ and $K>1$. Then $H(z) = (h_{K,\theta}(z))^n$ is nowhere uniformly quasiregular. 
\end{theorem}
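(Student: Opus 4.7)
The plan is to exploit two facts: (a) $H$ has constant complex dilatation $\mu = e^{2i\theta}(K-1)/(K+1)$ almost everywhere, and (b) by Proposition~\ref{prop:hform} every fixed ray $R_\phi$ of $H$ is strictly invariant, with $H|_{R_\phi}\colon r\mapsto c_\phi r^n$ in the radial variable. For any $z_0\in\C$ and any neighborhood $U$ of $z_0$, the goal is to prove $\sup_{w\in U}K_{H^m}(w)\to\infty$ as $m\to\infty$; this is equivalent to $K_{z_0}(H^m)$ being unbounded.

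First I would analyze the distortion on a fixed ray. Fixed rays exist by Theorems~\ref{thm:H1} and~\ref{thm:H2}. At $z^*\in R_\phi\setminus\{0\}$ the orbit stays on $R_\phi$, so the chain rule for complex dilatations reduces to the recursion
\[
\mu_{H^{m+1}}(z^*) \;=\; \frac{\mu_{H^m}(z^*) + \mu\,\omega_m}{1+\mu\,\overline{\mu_{H^m}(z^*)}\,\omega_m},\qquad \omega_m \;:=\;\frac{\overline{(H^m)_z(z^*)}}{(H^m)_z(z^*)},
\]
starting from $\mu_{H^0}=0$. Moreover the phase satisfies the additive law $\arg(H^{m+1})_z(z^*) = \arg(H^m)_z(z^*) + (n-1)\phi/n + \arg\bigl(1+\mu\,\overline{\mu_{H^m}(z^*)}\,\omega_m\bigr)$, because $\arg H_z$ is constant along $R_\phi$ and equal to $(n-1)\phi/n$. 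I would analyze this semilinear recursion to conclude $|\mu_{H^m}(z^*)|\to 1$, equivalently $K_{H^m}(z^*)\to\infty$.

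Second I would transport this to the arbitrary neighborhood $U$ using Theorem~\ref{thm:basin}. If $-\mu\in\mathcal{M}_n$, the backward orbit $\bigcup_{j\ge 0}H^{-j}(R_\phi)$ is dense in $\C$, so some $w\in U$ satisfies $H^{j_0}(w)\in R_\phi$ for some $j_0\ge 0$; if $-\mu\notin\mathcal{M}_n$, the Denjoy--Wolff basin $\Lambda$ is dense in $\C$, so some $w\in U\cap\Lambda$ has forward orbit asymptoting to $R_\phi$. The standard chain-rule lower bound $K_{f\circ g}(z)\ge K_f(g(z))/K_g(z)$, applied to $f=H^{m-j_0}$ and $g=H^{j_0}$, then gives
\[
K_{H^m}(w)\;\ge\;\frac{K_{H^{m-j_0}}(H^{j_0}(w))}{K_{H^{j_0}}(w)}\;\ge\;\frac{K_{H^{m-j_0}}(H^{j_0}(w))}{K^{j_0}},
\]
and Step~1 (together with a continuity extension of that step in the asymptotic case) makes the numerator unbounded in $m$, so that $\sup_{w\in U}K_{H^m}(w)\to\infty$ as required.

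The main obstacle is the first step. The recursion in $\mu_{H^m}(z^*)$ is not a standard M\"obius iteration, since the conjugate $\overline{\mu_{H^m}}$ appearing in the denominator makes the system semilinear. The crucial input to unlock the analysis is that on $R_\phi$ the quantities $\omega_m$ obey the additive phase law above, so the recursion can be compared to iteration by a fixed disk automorphism that is hyperbolic or parabolic at the boundary, whose orbits accumulate on $\partial\D$. A secondary obstacle arises in the $-\mu\notin\mathcal{M}_n$ case, where orbits only asymptote to $R_\phi$ rather than landing on it; here one must extend the Step~1 estimate to a tubular neighborhood of $R_\phi$ using continuity of $z\mapsto K_{H^k}(z)$ for fixed $k$, exploiting that the only $z$-dependence in the Jacobian of $H$ comes from the slowly varying rotation induced by $z\mapsto z^n$.
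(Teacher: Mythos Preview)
Your Step~1 contains a self-inflicted difficulty that the paper avoids, and you have not actually resolved it. You decompose $H^{m+1}=H\circ H^m$ and apply the composition formula with $f=H^m$ and $g=H$; this puts $\overline{\mu_{H^m}(z^*)}$ in the denominator and gives the semilinear recursion you wrote, with a varying unimodular factor $\omega_m=r_{H^m}(z^*)$ that depends on the entire history through $(H^m)_z$. You then say this ``can be compared to iteration by a fixed disk automorphism'' via the additive phase law, but that law itself involves $\arg(1+\mu\,\overline{\mu_{H^m}(z^*)}\,\omega_m)$, so the phase and the dilatation are coupled and you have not explained how to decouple them or why $|\mu_{H^m}(z^*)|\to 1$.

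The paper's key observation is to decompose the other way: write $H^m=H^{m-1}\circ H$ and apply the composition formula with $f=H$ and $g=H^{m-1}$. Then the only conjugate appearing is $\overline{\mu_H}=\overline{\mu}$, a \emph{constant}, and the unimodular factor is $r_H(z)=e^{-2(n-1)i\arg h(z)}$, which is constant along a fixed ray. Hence on $R_\phi$ one gets $\mu_{H^m}(z)=A(\mu_{H^{m-1}}(H(z)))$ for a \emph{fixed} M\"obius automorphism $A$ of $\D$, and by induction $\mu_{H^m}(z)=A^{m-1}(\mu)$. The problem then reduces to showing $\tau(A)\ge 4$, which is a concrete computation; no semilinearity arises at all. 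This is the missing idea in your Step~1.

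Your Step~2 is broadly the paper's approach, but the ``continuity extension'' for the case $-\mu\notin\mathcal{M}_n$ is not quite enough as stated. The orbit of $w\in\Lambda$ never reaches $R_\phi$, so continuity of $K_{H^k}$ for fixed $k$ does not immediately give $K_{H^m}(w)\to\infty$. The paper handles this by again decomposing in the order $H^m=H^{m-1}\circ H$, obtaining $\mu_{H^m}(z)=A_1\circ\cdots\circ A_{m-1}(\mu)$ with M\"obius maps $A_j\to A$ locally uniformly, and then invoking a theorem of Mandell--Magnus on convergence of such infinite compositions. Your chain-rule lower bound $K_{f\circ g}\ge K_f(g(\cdot))/K_g$ is fine for the pre-image case, but for the asymptotic case you would still need an argument of this type.
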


Before proving this, we need to recall some material on M\"obius transformations.

\subsection{M\"obius transformations}

Every M\"obius transformation of the unit disk can be written in the form
\[ A(z) = \frac{az+b}{cz+d}, \quad ad-bc =1, \:\:\: a+d\in \R.\]
The mapping can be represented by the matrix $\begin{pmatrix} a & b \\ c & d \end{pmatrix} $ which has trace-squared $\tau(A) = (a+d)^2$. The value of $\tau$ classifies the dynamical behaviour of $A$:
\begin{enumerate}[(i)]
\item if $\tau(A) \in[0,4)$ then $A$ is {\it elliptic} and there exists a fixed point $z_0 \in \D$;
\item if $\tau(A) = 4$, then $A$ is {\it parabolic} and there exists one fixed point $z_0 \in \partial \D$;
\item if $\tau(A) >4$, then $A$ is {\it hyperbolic} and there exist two fixed points in $\partial \D$.
\end{enumerate}

We will need the following theorem on the composition of varying M\"obius maps.

\begin{theorem}\label{sMlMM}\cite{MM}
Let $A,A_j$ be hyperbolic M\"{o}bius maps of $\D$ such that $A^m(z)\to w_0 \in\partial\D$ as $m\to\infty$ for all $z\in \D$ and $A_j \to A$ locally uniformly as $j\to\infty$.
Suppose we have a sequence $t_m$ of hyperbolic M\"{o}bius maps of $\D$ defined by
\begin{equation*}
t_m(z)=A_1\circ A_2\circ\ldots\circ A_m(z),
\end{equation*}
Then $t_m(z) \to w_0$ as $n\to \infty$ for all $z\in\D$.
\end{theorem}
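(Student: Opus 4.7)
The plan is to reduce the assertion to convergence at a single base point, and then trap the non-autonomous orbit in an invariant nested family of horoballs at $w_0$.

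First, every M\"obius self-map of $\D$ is an isometry of the hyperbolic metric $\rho_{\D}$, so $\rho_{\D}(t_m(z),t_m(z')) = \rho_{\D}(z,z')$ for all $z,z' \in \D$ and all $m$. Since a hyperbolic disc of fixed hyperbolic radius whose centre tends to a boundary point $w$ collapses to $\{w\}$ in the Euclidean metric, it suffices to prove $t_m(z_0) \to w_0$ for one chosen base point $z_0$. To make the dynamics transparent, I would conjugate $A$ by a M\"obius map so that, in the upper half-plane model $\Hp$, $w_0$ becomes $\infty$ and the repelling fixed point becomes $0$; then $A$ takes the form $z \mapsto \lambda z$ with $\lambda > 1$, and each $A_j$ is a hyperbolic M\"obius map of $\Hp$ converging locally uniformly to this limit. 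In particular the attracting fixed point $w_j$ of $A_j$ tends to $w_0$ and its boundary multiplier tends to $A'(w_0) = \alpha \in (0,1)$.

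The second step is a perturbed Julia's lemma. For $A$ hyperbolic at $w_0$, each horoball
\[H_r = \left\{z\in\D : \frac{|w_0-z|^2}{1-|z|^2} < r\right\}\]
satisfies $A(\overline{H_r}) \subset H_{\alpha r}$. Because $A_j \to A$ locally uniformly and the boundary data converges, for any $\beta \in (\alpha,1)$ there exists $J_0$ so that $A_j(\overline{H_r}) \subset H_{\beta r}$ for all $j \geq J_0$ and all $r \leq r_0$. Once the non-autonomous orbit enters $H_{r_0}$ at some step $\geq J_0$, it is trapped in the nested family $H_{\beta^k r_0}$ whose intersection is $\{w_0\}$, which forces convergence to $w_0$.

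The main obstacle is showing the orbit eventually enters $H_{r_0}$. Writing $y = (A_1 \circ \cdots \circ A_{J_0-1})(z_0) \in \D$, it remains to show that the tail iteration starting at $y$ and applying $A_{J_0}, A_{J_0+1},\ldots$ reaches $H_{r_0}$. Because $y$ lies strictly inside $\D$, it is hyperbolically far from the repelling fixed point $w_0'$, and the autonomous orbit $A^k(y) \to w_0$ enters $H_{r_0}$ after finitely many steps. A shadowing argument, comparing the non-autonomous orbit to $A^k(y)$ and bounding the compounded perturbation error by a geometric series via the local Lipschitz control of $A$ on compact subsets of $\overline{\D}\setminus\{w_0'\}$, then shows the two orbits stay close enough for the non-autonomous one also to enter $H_{r_0}$. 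This pre-contracting step is the delicate part, where one must rule out that accumulated perturbations drift the orbit towards $w_0'$ before the contracting regime of Julia's lemma can take over.
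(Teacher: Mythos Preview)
The paper does not supply a proof of this statement; it is quoted from \cite{MM}, so there is no argument in the paper to compare against. Your proposal, however, contains a genuine gap: you have read the composition in the wrong direction. The sequence $t_m=A_1\circ A_2\circ\cdots\circ A_m$ is an \emph{inner} composition, satisfying $t_{m+1}(z_0)=t_m\bigl(A_{m+1}(z_0)\bigr)$ rather than $A_{m+1}\bigl(t_m(z_0)\bigr)$. Your horoball-trapping step (``once the non-autonomous orbit enters $H_{r_0}$ at some step $\ge J_0$, it is trapped'') and your splitting ``$y=(A_1\circ\cdots\circ A_{J_0-1})(z_0)$, then apply $A_{J_0},A_{J_0+1},\dots$ to $y$'' both presuppose the forward recursion $x_{m+1}=A_{m+1}(x_m)$. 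For the composition as written, the head $A_1\circ\cdots\circ A_{J_0-1}$ is applied \emph{last}, after the tail $A_{J_0}\circ\cdots\circ A_m$ has already acted on $z_0$; Julia's lemma therefore does not trap the sequence $t_m(z_0)$ in the way you describe.

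In fact the stated conclusion cannot hold verbatim: take $A_j=A$ for every $j\ge 2$ and let $A_1$ be any hyperbolic automorphism of $\D$ with $A_1(w_0)\ne w_0$. Then $t_m(z)=A_1\bigl(A^{m-1}(z)\bigr)\to A_1(w_0)\ne w_0$, although all hypotheses are satisfied. What is both true and sufficient for the paper (Lemma~\ref{lem:nuqr2} only uses $|\mu_{H^m}(z)|\to 1$) is that $t_m(z)$ converges to \emph{some} constant on $\partial\D$, possibly depending on the full sequence $(A_j)$ rather than on $A$ alone. Your isometry reduction to a single base point is correct and useful for that weaker statement, but the dynamical part of the argument must be reorganised to respect the inner-composition structure.
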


\subsection{On fixed rays and switched rays}

We now show that if $z$ is on a fixed ray of $H$ or a pair of switched rays, then the distortion of the iterates of $H$ is unbounded.

Recall that the complex dilatation of a quasicregular mapping is given by
\[ \mu_f(z) = \frac{f_{\overline{z}}(z)}{f_z(z)}.\]
The composition formula for complex dilatations is (see for example \cite{FM}):
\begin{equation} 
\label{eq:comp}
\mu_{g\circ f}(z) = \frac{ \mu_f(z) + r_f(z)\mu_g(f(z))}{1+r_f(z)\overline{ \mu_f(z)}\mu_g(f(z))},
\end{equation}
where $r_f(z) = \overline{f_z(z)}/f_z(z)$.
Hence $H(z)=[h(z)]^n$, we see that $\mu_H$ is the constant
\[ \mu_H(z) \equiv e^{2i\theta}\left ( \frac{K-1}{K+1} \right ) =: \mu.\]

\begin{lemma}
\label{lem:6.1}
For $m\geq 1$,
\[ \mu_{H^m}(z) = \frac{ \mu_H + e^{-2(n-1)i\arg h(z)}\mu_{H^{m-1}}(H(z))}{1+ e^{-2(n-1)i\arg h(z)} \overline{\mu_H}\mu_{H^{m-1}}(H(z))}.\]
Next, if $z$ is on a fixed ray $R_{\phi}$ of $H$, then $\mu_{H^m}(z) = A^m(\mu)$, where $A$ is the M\"obius transformation
\[ A(w)  = \frac{ e^{-2(n-1)i(\phi +2k\pi) /n }w + \mu }{1+  e^{-2(n-1)i(\phi +2k\pi) /n }\overline{\mu} w},\]
for some $k\in \{0,1,\ldots,n-1\}$.
Finally, if $z$ is on a pair of rays $R_{\phi},R_{\phi+\pi}$ switched by $H$, then $\mu_{H^m}(z) = A^m(\mu)$, where $A$ is the M\"obius transformation
\[ A(w)  = \frac{ e^{-2(n-1)i(\phi +(2k+1)\pi) /n }w + \mu }{1+  e^{-2(n-1)i(\phi +(2k+1)\pi) /n }\overline{\mu} w},\]
for some $k\in \{0,1,\ldots,n-1\}$.
\end{lemma}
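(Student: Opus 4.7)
The plan is to apply the composition formula \eqref{eq:comp} to the decomposition $H^m = H^{m-1}\circ H$ and then iterate. Since $\mu_H \equiv \mu$ is constant, substituting $f=H$, $g = H^{m-1}$ produces the first displayed formula as soon as $r_H(z)$ is identified. Differentiating $H = h^n$ and using $h_z = (K+1)/2 \in \R$ gives $H_z(z) = n\,[h(z)]^{n-1}(K+1)/2$, so
\[
r_H(z) \;=\; \frac{\overline{H_z(z)}}{H_z(z)} \;=\; \frac{\overline{h(z)}^{\,n-1}}{h(z)^{n-1}} \;=\; e^{-2(n-1)i\arg h(z)},
\]
which is precisely the phase appearing in the lemma.

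For the fixed-ray statement, the key observation is that $h$ is $\R$-linear, so $h(rz) = r\,h(z)$ for $r>0$, and hence $\arg h(\cdot)$, and therefore $r_H(\cdot)$, is constant along every ray emanating from $0$. By Proposition \ref{prop:hform}, $\arg \widetilde H(e^{i\phi}) \equiv n\arg h(e^{i\phi}) \pmod{2\pi}$, so $R_\phi$ being a fixed ray of $H$ forces $\arg h(e^{i\phi}) = (\phi + 2k\pi)/n$ for some $k \in \{0,\dots,n-1\}$. Substituting this value into the expression for $r_H$ produces exactly the Möbius map $A$ stated in the lemma. Since $H$ sends $R_\phi$ into itself, the recursion rewrites as $\mu_{H^m}(z) = A(\mu_{H^{m-1}}(H(z)))$ with the \emph{same} $A$ at each step, and induction on $m$ anchored at $\mu_H(z) = \mu$ yields the formula $\mu_{H^m}(z) = A^m(\mu)$.

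The switched-ray case runs in parallel, the only subtlety being that one must check that the same $A$ governs the dynamics on both $R_\phi$ and $R_{\phi+\pi}$. When $R_\phi \leftrightarrow R_{\phi+\pi}$, the relation $\arg \widetilde H(e^{i\phi}) \equiv \phi+\pi \pmod{2\pi}$ gives $\arg h(e^{i\phi}) = (\phi + (2k+1)\pi)/n$ for some $k$, producing the $A$ stated in the lemma. To get the same expression on $R_{\phi+\pi}$, use $\R$-linearity once more: $h(-z) = -h(z)$, which shifts $\arg h$ by $\pi$ upon passing from $R_\phi$ to $R_{\phi+\pi}$ and hence multiplies $r_H$ by $e^{-2(n-1)i\pi} = 1$. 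This last cancellation, which relies on $n-1 \in \Z$, is the main technical point of the argument: it is what ensures that a single Möbius transformation $A$ controls the two-step shuttling between $R_\phi$ and $R_{\phi+\pi}$, so the inductive argument of the fixed-ray case carries over unchanged.
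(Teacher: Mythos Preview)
Your proof is correct and follows essentially the same route as the paper: compute $H_z(z)=n(K+1)h(z)^{n-1}/2$ to get $r_H(z)=e^{-2(n-1)i\arg h(z)}$, identify $\arg h$ on a fixed ray as $(\phi+2k\pi)/n$ (respectively $(\phi+(2k+1)\pi)/n$ in the switched case), and then iterate the resulting constant M\"obius map. Your explicit remark that $h(-z)=-h(z)$ forces $r_H$ to agree on antipodal rays because $e^{-2(n-1)i\pi}=1$ is exactly the mechanism the paper uses, stated a bit more transparently.
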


\begin{proof}
For the first part, we just need to calculate $r_H(z)$ and apply \eqref{eq:comp} to $H^m = H^{m-1}\circ H$. We can calculate that
\begin{equation}
\label{eq:Hz}
H_z(z) = nh(z)^{n-1}h_z(z) = \frac{n(K+1)h(z)^{n-1}}{2},
\end{equation}
from which it follows that $r_H(z) = e^{-2(n-1)i\arg h(z)}$. 

Next, suppose that $R_{\phi}$ is fixed by $H$ and $z\in R_{\phi}$. Since $\arg H(z) = \phi$, we must have $\arg h(z) = (\phi +2k\pi)/n$ for some $k\in \{0,1,\ldots,n-1\}$. Hence $r_H$ is constant on $R_{\phi}$ and it follows by induction that $\mu_{H^m}$ must be constant on $R_{\phi}$ and given by the desired iterated M\"obius map evaluated at $w=\mu$.

For the final case, if $R_{\phi}$ and $R_{\phi+\pi}$ are switched by $H$ and say $z\in R_{\phi}$, then $h(z) \in R_{\phi +\pi}$ and so $\arg h(z) = (\phi +2k\pi + \pi)/n$ for some $k\in \{0,1,\ldots,n-1\}$. Since $h$ maps pairs of opposite rays onto pairs of opposite rays, if $z\in R_{\phi + \pi}$, then $\arg h(z) = \pi +  (\phi +2k\pi + \pi)/n$. Hence for $z$ on either of the swapped rays, we have
$r_H(z) = e^{-2(n-1)i(\phi +(2k+1)\pi)}$, and the claim follows. 
\end{proof}

Suppose that $R_{\phi}$ is a fixed ray of $H$. Then there is an associated M\"obius map given by $A$ in Lemma \ref{lem:6.1} which we can write as
\[ A(z) = \frac{\alpha z + \mu}{1+\alpha \overline{\mu}z} = \frac{ \alpha z/ D + \mu /D }{1/D + \alpha \overline{\mu} z / D},\]
where $\alpha = e^{-2(n-1)i(\phi +2k\pi) /n }$ and $D = e^{-(n-1)i(\phi +2k\pi) /n }(1-|\mu |^2)^{1/2}$. The point is that the matrix representing this latter way of writing $A$ has determinant $1$ and has trace-squared equal to
\begin{equation}\label{eq:tau} \tau(A) = \frac{ (\alpha+1)^2}{D^2}.\end{equation}
Since M\"obius transformations and their dynamical behaviour are classified by $\tau$, we can use $A$ to get information about how $H^m$ behaves on $R_{\phi}$.

\begin{lemma}
\label{lem:nuqrray}
Let $R_{\phi}$ be a fixed ray of $H$. Then, with $k$ as above,
\[ \tau(A)= \frac{(K+1)^2\cos^2((n-1)(\phi + 2k\pi)/n)}{K} \geq 4.\]
Hence $A$ is parabolic or hyperbolic and consequently $|A^m(z)| \to 1$ for any $z\in \D$.
\end{lemma}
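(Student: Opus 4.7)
The plan is to reduce the expression in \eqref{eq:tau} to a single closed form, then use the hypothesis that $R_\phi$ is a fixed ray to reveal $\tau(A)-4$ as a perfect square. Set $\beta := (n-1)(\phi+2k\pi)/n$, so that $\alpha = e^{-2i\beta}$ and $D = e^{-i\beta}(1-|\mu|^2)^{1/2}$. Factoring $\alpha+1 = 2e^{-i\beta}\cos\beta$ gives $(\alpha+1)^2 = 4e^{-2i\beta}\cos^2\beta$, while $D^2 = e^{-2i\beta}(1-|\mu|^2)$, and the exponentials cancel in \eqref{eq:tau}. Using $1-|\mu|^2 = 4K/(K+1)^2$ then yields the stated formula $\tau(A) = (K+1)^2\cos^2\beta/K$.

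For the inequality, the fact that $R_\phi$ is fixed forces $\arg h(z) = (\phi+2k\pi)/n$ for $z\in R_\phi$, and applying the tangent identity from the proof of Proposition \ref{prop:hform} to $h$ itself gives $\tan((\phi+2k\pi)/n-\theta) = \tan(\phi-\theta)/K$. Writing $u=\phi-\theta$ and $v=(\phi+2k\pi)/n-\theta$, an elementary computation shows $\beta - (u-v) = 2k\pi$, so $\cos\beta = \cos(u-v)$, while $\tan u = K\tan v$. Parametrizing $t := \sin^2 v \in [0,1]$ and using the latter to express $\sin u, \cos u$ through $v$ and $K$ (so that $\cos^2 u = (1-t)/(1+(K^2-1)t)$ and $\sin^2 u = K^2 t/(1+(K^2-1)t)$), the addition formula produces
\[ \cos^2(u-v) = \frac{(1+(K-1)t)^2}{1+(K^2-1)t}. \]
Substituting into the closed form for $\tau(A)$ and expanding yields
\[ \tau(A) - 4 = \frac{(K-1)^2\bigl(1-(K+1)t\bigr)^2}{K(1+(K^2-1)t)} \geq 0, \]
so $\tau(A) \geq 4$. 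By the trace classification recalled just before the statement, $A$ is therefore parabolic or hyperbolic, all its fixed points lie on $\partial\D$, and the Denjoy-Wolff theorem gives $A^m(z) \to w_0 \in \partial\D$ for every $z\in\D$, whence $|A^m(z)|\to 1$.

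The main obstacle is the algebraic reduction in the second paragraph: one must recognize that $\beta$ and $u-v$ are congruent modulo $2\pi$ (so that $\cos\beta$ genuinely depends only on the ratio $\tan u/\tan v$), and then push the $t$-parametrization far enough to uncover the perfect-square factor $(1-(K+1)t)^2$. Once that factorization surfaces the inequality is manifest, and the dynamical conclusion follows immediately from the standard theory.
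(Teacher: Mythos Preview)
Your argument is correct. The derivation of the closed form for $\tau(A)$ is identical to the paper's (just with the convenient abbreviation $\beta$), and your verification that $\beta-(u-v)=2k\pi$ is exactly the splitting $(n-1)(\phi+2k\pi)/n=(\phi+2k\pi-\theta)-((\phi+2k\pi)/n-\theta)$ that the paper uses before applying the tangent subtraction formula.

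Where you diverge is in the final estimate. The paper parametrizes by $T=\tan(\phi-\theta)$, computes
\[
\tan\beta=\frac{(K-1)T}{K+T^{2}},
\]
bounds the right-hand side in absolute value by $(K-1)/(2\sqrt{K})$ via AM--GM, and then converts back via $\cos^{2}\beta=1/(1+\tan^{2}\beta)$ to get $\tau(A)\geq4$. You instead parametrize by $t=\sin^{2}v$, compute $\cos^{2}(u-v)$ \emph{exactly}, and factor
\[
\tau(A)-4=\frac{(K-1)^{2}\bigl(1-(K+1)t\bigr)^{2}}{K\bigl(1+(K^{2}-1)t\bigr)}.
\]
This is a genuine (if modest) variation: the paper's route is shorter because the AM--GM step is immediate, while yours does more algebra but yields more information, namely an explicit nonnegative expression that also pinpoints the equality case $t=1/(K+1)$ (equivalently $\tan^{2}(\phi-\theta)=K$). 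One small point worth making explicit in your write-up is why the cross term in $\cos^{2}(u-v)$ takes the positive sign: since $\tan u=K\tan v$ with $K>0$, the product $\sin u\sin v\cos u\cos v=\cos^{2}u\cos^{2}v\cdot\tan u\tan v$ is nonnegative, which justifies the formula $(1+(K-1)t)^{2}/(1+(K^{2}-1)t)$ without ambiguity.
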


\begin{proof}
Using \eqref{eq:tau},
\begin{align*} 
\tau(A) &=  \frac{ (\alpha+1)^2}{D^2} \\
& = \frac{ (e^{-2(n-1)i(\phi +2k\pi) /n } + 1)^2 }{e^{-2(n-1)i(\phi +2k\pi) /n }(1-|\mu|^2)} \\
&= \frac{ e^{2(n-1)i(\phi +2k\pi) /n } + 2 + e^{-2(n-1)i(\phi +2k\pi) /n } }{1- \left ( \frac{K-1}{K+1} \right )^2 } \\
& = \frac{(K+1)^2 (e^{(n-1)i(\phi +2k\pi) /n } + e^{-(n-1)i(\phi +2k\pi) /n } )^2 }{4K} \\
&= \frac{(K+1)^2\cos^2((n-1)(\phi + 2k\pi)/n)}{K},
\end{align*}
as claimed. We have to show that this expression is always at least $4$. To that end, if $R_{\phi}$ is a fixed ray, then
\begin{equation}\label{eq:6.2a} 
\tan \left ( \frac{\phi + 2k\pi}{n} - \theta \right ) = \frac{\tan(\phi - \theta)}{K}.
\end{equation}
Next, \eqref{eq:6.2a} and the tangent addition formula give
\begin{align*}
\tan \left ( \frac{ (n-1)(\phi+2k\pi)}{n} \right ) &= \tan \left ( (\phi+2k\pi - \theta) - ((\phi + 2k\pi )/n - \theta ) \right ) \\
&= \frac{ \tan(\phi +2k\pi - \theta) - \tan((\phi + 2k\pi )/n - \theta ) }{1+ \tan(\phi +2k\pi - \theta)\tan((\phi + 2k\pi )/n - \theta )} \\
&= \frac{ (K-1) \tan(\phi + 2k\pi -\theta )}{K+\tan^2(\phi +2k\pi - \theta )}.
\end{align*}
Consider the function
\[ F(T) = \frac{ (K-1)T}{K+T^2}.\]
An elementary calculation shows that $|F(T)| \leq \frac{K-1}{2\sqrt{K}}$. Then
since $\cos^2(x) = (1+\tan^2(x))^{-1}$, we get
\begin{align*}
\tau(A) &=  \frac{(K+1)^2\cos^2((n-1)(\phi + 2k\pi)/n)}{K} \\
&= \frac{ (K+1)^2 }{K\left (1 + \tan^2\left ( \frac{ (n-1)(\phi+2k\pi)}{n} \right ) \right )}\\
&\geq \frac{ (K+1)^2}{K(1+ (K-1)^2/4K )} \\
&= \frac{4(K+1)^2}{4K + (K-1)^2} = 4.
\end{align*}
Hence by the classification of M\"obius transformations, $A$ is parabolic or hyperbolic. This means that there exists $\nu \in \partial \D$ such that for every $z\in \D$, $A^m(z) \to \nu$.
\end{proof}

This shows that on fixed rays, $H$ is nowhere uniformly quasiregular. The case for switched rays follows analogously with $2k$ replaced with $2k+1$, recalling Lemma \ref{lem:6.1}.

\subsection{Everywhere else}

 To show that $H$ is nowhere uniformly quasiregular everywhere, we combine Lemma \ref{lem:nuqrray} with Theorem \ref{thm:basin} on the density of either the pre-images of a fixed ray if $-\mu \in \mathcal{M}_n$, or the density of the basin of attraction $\Lambda$ otherwise.

We deal first with the case that $-\mu \in \mathcal{M}_n$.

\begin{lemma}
\label{lem:nuqr1}
Suppose that $-\mu \in \mathcal{M}_n$. Then $H$ is nowhere uniformly quasiregular.
\end{lemma}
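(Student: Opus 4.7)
The plan is to combine Lemma \ref{lem:nuqrray}, which shows that the distortion of the iterates of $H$ blows up along any fixed ray, with Theorem \ref{thm:basin}, which in the case $-\mu\in\mathcal{M}_n$ asserts that the backward orbit of any fixed ray is dense in $\mathbb{C}$. The composition formula \eqref{eq:comp} is the bridge between the two.

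Concretely, fix a fixed ray $R_\phi$ with associated M\"obius map $A$ from Lemma \ref{lem:nuqrray}. Applying \eqref{eq:comp} inductively to the decomposition $H^{m+k}=H^m\circ H^k$, together with the standard disk-preserving identity
\[
1-\left|\tfrac{a+\alpha\zeta}{1+\bar{a}\alpha\zeta}\right|^2=\frac{(1-|a|^2)(1-|\zeta|^2)}{|1+\bar{a}\alpha\zeta|^2},\qquad |\alpha|=1,
\]
yields the one-sided comparison
\[
1-|\mu_{H^{m+k}}(w)|^2\;\leq\;K^k\bigl(1-|\mu_{H^m}(H^k(w))|^2\bigr).
\]
When $H^k(w)\in R_\phi$, the right-hand side simplifies to $K^k(1-|A^m(\mu)|^2)$, and by Lemma \ref{lem:nuqrray} this tends to $0$ as $m\to\infty$ since $A$ is parabolic or hyperbolic. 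Hence $K_{H^{m+k}}(w)\to\infty$ at every point $w$ of the backward orbit of $R_\phi$.

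Fix now $z\in\mathbb{C}$ and $M>0$. By Lemma \ref{lem:6.1}, $\mu_{H^N}(w)$ depends only on $\arg w$, so the set $\{w:K_{H^N}(w)>M\}$ is a union of rays; it therefore suffices to show that for some $N$, arguments with $K_{H^N}>M$ accumulate at $\arg z$. For each $k$, Lemma \ref{lem:nuqrray} lets us pick $m_k$ with $K^k(1-|A^{m_k}(\mu)|^2)$ so small that the displayed inequality forces $K_{H^{m_k+k}}(w)>M$ at every $w$ whose argument lies in $\tilde H^{-k}(\phi)$. Since $-\mu\in\mathcal{M}_n$ gives $J(B)=\partial\mathbb{D}$ and hence $J(\tilde H)=\partial\mathbb{D}$ by Lemma \ref{lem:rescale}, Lemma \ref{lem:bwards} shows that $\bigcup_{k\geq 0}\tilde H^{-k}(\phi)$ is dense in $\partial\mathbb{D}$, and because $\tilde H$ is expanding on its Julia set the depth-$k$ pre-image sets $\tilde H^{-k}(\phi)$ themselves become $\varepsilon_k$-dense with $\varepsilon_k\to 0$. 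Taking $k$ large enough that $\tilde H^{-k}(\phi)$ meets every sufficiently small arc around $\arg z$ produces an iterate $N=m_k+k$ witnessing $K_z(H^N)\geq M$; letting $M\to\infty$ yields the unboundedness.

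The main obstacle is balancing the penalty factor $K^k$ in the comparison against the depth $k$ required to make $\tilde H^{-k}(\phi)$ sufficiently dense about $\arg z$. The escape is that for each fixed $k$, the iterate $m_k$ can be chosen arbitrarily large using only the qualitative statement $|A^m(\mu)|\to 1$, so no quantitative rate matching between the two effects is needed. The case $z=0$ (where every argument is present in every neighbourhood, forcing $K_0(H^N)\geq \Xi_N(\phi)\to\infty$) and the finitely many exceptional arguments on the branch sets of fixed iterates are handled by direct inspection.
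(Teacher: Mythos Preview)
Your treatment of points $w$ on the backward orbit of $R_\phi$ is correct but heavier than the paper's. Rather than extracting a one-sided inequality with a $K^k$ penalty from the identity you quote, the paper simply observes that \eqref{eq:comp} writes $\mu_{H^{j+m}}(z)=M\bigl(\mu_{H^j}(H^m(z))\bigr)$ for a \emph{fixed} disk M\"obius map $M$ (depending only on $z$ and $m$, not on $j$). Since $H^m(z)\in R_\phi$, Lemma~\ref{lem:nuqrray} gives $|\mu_{H^j}(H^m(z))|\to 1$, and because $M$ is a M\"obius self-map of $\D$ one gets $|\mu_{H^{j+m}}(z)|\to 1$ immediately. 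There is thus nothing to balance, and the whole discussion of the $K^k$ factor versus depth is unnecessary.

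The accumulation argument for general $z$ has a gap. For any fixed $k$, the set $\tilde H^{-k}(\phi)$ is \emph{finite} (at most $n^k$ points), so it cannot ``meet every sufficiently small arc around $\arg z$''; $\varepsilon_k$-density yields at best a single argument $\psi_k$ within $\varepsilon_k$ of $\arg z$ at which $K_{H^{N_k}}>M$, where $N_k=m_k+k$. Letting $k$ vary you obtain $\psi_k\to\arg z$, but the witnessing iterate $N_k$ changes with $k$, so you have not exhibited any single $N$ for which $\{K_{H^N}>M\}$ accumulates at $\arg z$. And this is what you actually need: since $\mu_{H^N}$ (hence $K_{H^N}$) is continuous in the argument away from the origin, one has $K_z(H^N)=K_{H^N}(z)$, and knowing $K_{H^{N_k}}(\psi_k)>M$ at some $\psi_k\neq\arg z$ gives no lower bound whatsoever for $K_{H^{N_k}}(z)$. (Note also that your appeal to ``$\tilde H$ is expanding on its Julia set'' is unjustified in the parabolic component of $\mathcal{M}_n$.) The paper, for its part, handles the case $z\notin\mathcal P$ with the bare density statement that every neighbourhood of $z$ meets a pre-image ray; your additional $\varepsilon_k$-density and expansion hypotheses go beyond what the paper invokes but do not resolve the underlying order-of-quantifiers issue.
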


\begin{proof}
Let $\phi$ be a fixed ray of $H$ and fix $z\in\C$. Then $\mathcal{P} = \{ H^{-k}(R_{\phi}) \}$ is dense in $\C$ by Theorem \ref{thm:basin}. If $z$ lies on a ray $R_\varphi\in\mathcal{P}$ then there exists $m$ such that $H^m(R_\varphi)=R_\phi$. That is, $H^k(z)$ lies on the ray $R_\phi$ for $k\geq m$. We can apply \eqref{eq:comp} to obtain:
\begin{equation}\label{s6e1}
\mu_{H^j\circ H^m}(z)=\frac{\mu_{H^m}(z) + r_{H^m}(z) \mu_{H^j}(H^m(z))}{1+r_{H^m}(z)\overline{\mu_{H^m}(z)}\mu_{H^j}(H^m(z))},
\end{equation}
for $j\geq 0$,
where $r_{H^m}(z)=\overline{(H^m)_{z}(z)}/(H^m)_z(z)$. Notice that $|r_{H^m}(z)|=1$ and that if we define
\begin{equation}\label{s6e2}
M(w):=r_{H^m}(z)\left(\frac{w + \mu_{H^m}(z)\overline{r_{H^m}(z)}}{1+\overline{[\overline{r_{H^m}(z)}\mu_{H^m}(z)]}w}\right),
\end{equation}
then $M$ is a M\"{o}bius map of the disk and further we see that
\[M[\mu_{H^j}(H^m(z))]=\mu_{H^j\circ H^m}(z),\]
for $j \geq 0$.
Using the fact that $H^{j+m}(z) \in R_{\phi}$ for $j \geq 0$, \eqref{s6e2} and Lemma \ref{lem:nuqrray} we see that \eqref{s6e1} becomes
\begin{equation}\label{s6e3}
\mu_{H^j\circ H^m}(z)=M(A^{j-1}(\mu_{H^m}(z))),
\end{equation}
for $j \geq 0$.
We know that $|A^j(w)|\to 1$ as $j\to\infty$ for any $w \in \D$ and that $|M(w)| \to 1$ as $|w|\to 1$, and so we have
\[|\mu_{H^k(z)}(z)|\to 1 \mbox{ as } k\to\infty.\]
Any neighbourhood $U\ni z$ trivially contains $z$ and so $K_z(H^k)$ is unbounded as $k \to \infty$ for any $z$ on a ray in $\mathcal{P}$.

Next suppose $z$ lies on a ray not in $\mathcal{P}$. As $\mathcal{P}$ is dense in $\C$, any neighbourhood $U\ni z$ must intersect a ray $R_\varphi\in\mathcal{P}$. Picking one such ray there must exist $m$ (depending on the neighbourhood $U$) such that $H^m(R_\varphi)=R_\phi$ and we can apply the same argument above to conclude $K_z(H^k)$ is unbounded as $k \to \infty$ for any $z\in\C.$
\end{proof}

We next turn to the case where $-\mu \notin \mathcal{M}_n$ for even degree.

\begin{lemma}
\label{lem:nuqr2}
Let $n$ be even and suppose that $-\mu \notin \mathcal{M}_n$. Then $H$ is nowhere uniformly quasiregular.
\end{lemma}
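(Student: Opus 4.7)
The plan is to mimic the proof of Lemma \ref{lem:nuqr1}, replacing density of backward orbits of a fixed ray by density of the attracting basin $\Lambda$, and replacing the iterated M\"obius map $A^m$ by a composition of varying M\"obius maps controlled via Theorem \ref{sMlMM}.

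First, fix $z\in\Lambda$. Since $n$ is even and $-\mu\notin\mathcal{M}_n$, the Denjoy--Wolff point of $B$ corresponds to a single attracting fixed ray $R_{\phi_0}$ of $H$, and $\arg(H^m(z))\to\phi_0$. Iterating the composition formula \eqref{eq:comp}, exactly as in the derivation of Lemma \ref{lem:6.1} but without collapsing the argument of $h$ to a constant, yields
\[ \mu_{H^m}(z)=M_0\circ M_1\circ\cdots\circ M_{m-1}(\mu),\qquad M_j(w)=\frac{\mu+r_H(H^j(z))\,w}{1+r_H(H^j(z))\,\overline{\mu}\,w},\]
where $r_H(H^j(z))=e^{-2(n-1)i\arg h(H^j(z))}$. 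Since $\arg H^j(z)\to\phi_0$ and $h$ maps rays to rays, $r_H(H^j(z))$ converges to the constant $\alpha=e^{-2(n-1)i(\phi_0+2k\pi)/n}$ for the appropriate $k$, so $M_j\to A$ locally uniformly in $\D$, where $A$ is precisely the M\"obius map of $R_{\phi_0}$ appearing in Lemma \ref{lem:6.1}.

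Second, apply Theorem \ref{sMlMM} with $A_j=M_{j-1}$ to conclude that $M_0\circ\cdots\circ M_{m-1}(\mu)\to w_0\in\partial\D$. Hence $|\mu_{H^m}(z)|\to 1$, so $K_{H^m}(z)\to\infty$, which forces $K_z(H^m)\to\infty$ trivially. Finally, if $z\notin\Lambda$, density of $\Lambda$ in $\C$ (Theorem \ref{thm:basin}) lets us pick, for any neighbourhood $U$ of $z$, a point $z'\in U\cap\Lambda$; applying the previous step to $z'$ and noting that every neighbourhood of $z'$ contained in $U$ contributes to $K_z(H^m)$, we conclude $K_z(H^m)\to\infty$, as required.

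The main obstacle is verifying the hypotheses of Theorem \ref{sMlMM}: that the limit $A$ and all (or at least all but finitely many) of the $M_j$ are \emph{hyperbolic} M\"obius maps. Lemma \ref{lem:nuqrray} gives $\tau(A)\geq 4$, with equality corresponding precisely to the parabolic case, and the hypothesis $-\mu\notin\mathcal{M}_n$ combined with the classification in Theorem \ref{thm:H1} shows that at the Denjoy--Wolff ray we in fact have $\tau(A)>4$, so $A$ is hyperbolic. By continuity of $\tau$ in the parameter $r_H(H^j(z))$ and the convergence $M_j\to A$, the $M_j$ are hyperbolic for all sufficiently large $j$; the finitely many initial $M_j$ that could fail to be hyperbolic are absorbed into a fixed outer M\"obius factor, which is a biholomorphism of $\D$ and therefore does not affect the conclusion that $|M_0\circ\cdots\circ M_{m-1}(\mu)|\to 1$.
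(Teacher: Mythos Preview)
Your approach is essentially the paper's: iterate the composition formula \eqref{eq:comp} to write $\mu_{H^m}(z)$ as $A_1\circ\cdots\circ A_{m-1}(\mu)$ for varying M\"obius maps $A_j$ (your $M_{j-1}$), observe $A_j\to A$ where $A$ is the M\"obius map of the Denjoy--Wolff ray from Lemma~\ref{lem:6.1}, and invoke Theorem~\ref{sMlMM}. You are in fact more thorough than the paper in two places: you treat $z\notin\Lambda$ explicitly via density (the paper's proof stops after the $z\in\Lambda$ case), and you attempt to check the hyperbolicity hypothesis of Theorem~\ref{sMlMM}, which the paper simply does not address.

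One caution on that last point: your assertion that ``$-\mu\notin\mathcal{M}_n$ combined with Theorem~\ref{thm:H1} shows $\tau(A)>4$'' conflates two different classifications. Theorem~\ref{thm:H1} tells you the \emph{Blaschke product} $B$ is hyperbolic (or parabolic with Cantor Julia set), but that is not the same statement as the \emph{M\"obius map} $A$ of Lemma~\ref{lem:6.1} being hyperbolic; no direct link between $B'(z_0)$ and $\tau(A)$ has been established. The paper itself writes ``$A$ is either a parabolic or hyperbolic M\"obius transformation'' and then applies Theorem~\ref{sMlMM} regardless, so your argument is at least as complete as the paper's --- but if you want a genuinely airtight proof you would need either to show $\tau(A)>4$ at the Denjoy--Wolff ray directly, or to cite a version of Theorem~\ref{sMlMM} that also covers the parabolic limit.
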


\begin{proof}
By hypothesis, the Blaschke product $B$ has a Denjoy-Wolff point $z_0$ on $\partial \D$ and so $H$ has a Denjoy-Wolff fixed ray $R_0$ with argument $\phi_0$ and corresponding basin of attraction $\Lambda$.

Fix $z\in\C$ and suppose that $z\in\Lambda$. Then the argument of $H^m(z)$ tends to the argument of the Denjoy-Wolff ray $R_0$ as $m\to\infty$.

We define the sequence $e^{i\phi_m} \in \partial \D$ by $H^m(z)\in R_{\phi_m}$. Then $\phi_m \to \phi_0$ as $m\to\infty$.
Again we use \eqref{eq:comp} to see that
\[\mu_{H^m}(z)=\mu_{H^{m-1}\circ H}(z) =\frac{\mu_{H}(z)+r_{H}(z)\mu_{H^{m-1}}(H(z))}{1+r_{H}(z)\overline{\mu_{H}(z)}\mu_{H^{m-1}}(H(z))}.\]
Recalling that $\mu_H$ is constant, we can write
\[ \mu_{H^m}(z) = A_1(\mu_{H^{m-1}}(H(z))),\]
where $A_1$ is the M\"{o}bius map
\[ A_1(w) = \frac{ \mu_H + r_H(z)w}{1+r_H(z)\overline{\mu_H}w}.\]
Using the same method, we may write
\[\mu_{H^{m-1}}(H(z)) = A_2(\mu_{H^{m-2}}(H^2(z))),\]
where $A_2$ is the M\"{o}bius map
\[ A_2(w) = \frac{ \mu_H + r_H(H(z))w}{1+r_H(H(z))\overline{\mu_H}w}.\]
By induction, we may write
\[ \mu_{H^m}(z) = A_1\circ A_2\circ\ldots\circ A_{m-1}(\mu_H(H^{m-1}(z))),\]
where each $A_i$ is the M\"{o}bius map given by
\[ A_i(w)=\frac{ \mu_H + r_H(H^{i-1}(z))w}{1+r_H(H^{i-1}(z))\overline{\mu_H}w}.\]
By \eqref{eq:Hz}, we have $H_z(z) = n(K+1)h(z)^{n-1}/2$, and so
\[ r_H(H^{j-1}(z)) = \exp ( -2i(n-1) \arg[h(H^{j-1}(z))] ).\]
As $j \to \infty$, we have $\arg [h(H^{j-1}(z))] \to \arg [h(re^{i\phi_0})]$ for any $r>0$ since $z\in \Lambda$.
Recalling Lemma \ref{lem:6.1}, there exists $k\in\{ 0,1,\ldots,n-1\}$ such that
\[ \arg [h(re^{i\phi_0})] = \frac{\phi_0 + 2k\pi}{n}.\]
Therefore as $j\to \infty$,
\[ r_H(H^{j-1}(z)) \to \exp \left ( \frac{-2(n-1)i(\phi_0+2k\pi)}{n} \right ),\]
and so $A_j$ converges to the M\"obius transformation $A$ given in Lemma \ref{lem:6.1}. By Lemma \ref{lem:6.1} and Lemma \ref{lem:nuqrray}, $A$ is either a parabolic or hyperbolic M\"obius transformation. Either way, there exists $w_0\in \partial \D$ such that $A^m(z) \to w_0$ for all $z\in \D$.

We can write
\begin{equation}\label{s6e6}
\mu_{H^m}(z)=A_1\circ A_2\circ\ldots\circ A_{m-1}(\mu_H)=:t_{m-1}(\mu_H).
\end{equation}
Applying Theorem~\ref{sMlMM} to the sequence $t_m$ given in \eqref{s6e6} we obtain that $|\mu_{H^m}(z)| \to 1$. This proves the lemma.
\end{proof}

We finally deal with the case that $-\mu \notin \mathcal{M}_n$ and $n$ is odd.

\begin{lemma}
\label{lem:nuqr3}
Let $n$ be odd and suppose that $-\mu \notin \mathcal{M}_n$. Then $H$ is nowhere uniformly quasiregular.
\end{lemma}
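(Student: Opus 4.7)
The strategy parallels Lemma \ref{lem:nuqr2}. By Theorem \ref{thm:basin}, the basin of attraction $\Lambda$ is dense in $\C$; the density reduction used at the end of the proof of Lemma \ref{lem:nuqr1} then allows us to focus on points $z\in\Lambda$ and show that $|\mu_{H^m}(z)|\to 1$. Fix such a $z$. By Theorem \ref{thm:H2}(iii), the Denjoy-Wolff point of $B$ corresponds to an opposite pair of rays $R_{\phi_0},R_{\phi_0+\pi}$ that are either both fixed by $H$ (case (iii)(a)) or both switched by $H$ (case (iii)(b)), and the orbit $\{H^m(z)\}$ accumulates on their union.

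In case (iii)(a) the argument of Lemma \ref{lem:nuqr2} transfers verbatim. Writing $\mu_{H^m}(z)=A_1\circ\cdots\circ A_{m-1}(\mu_H)$ with the M\"obius maps $A_j$ defined as in that proof, the fact that $H^m(z)$ eventually tracks a single fixed ray forces $A_j$ to converge to the fixed-ray M\"obius map $A$ of Lemma \ref{lem:6.1}, which is parabolic or hyperbolic by Lemma \ref{lem:nuqrray}. Theorem \ref{sMlMM} then yields $|\mu_{H^m}(z)|\to 1$.

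Case (iii)(b) is the genuinely new situation: now $\{H^m(z)\}$ alternates between $R_{\phi_0}$ and $R_{\phi_0+\pi}$, so a priori the $A_j$ oscillate between two different limit maps. The key observation is that for odd $n$ these two maps coincide. Indeed, by the switched-ray half of Lemma \ref{lem:6.1}, the values of $\arg h$ on $R_{\phi_0}$ and $R_{\phi_0+\pi}$ differ by exactly $\pi$; since $r_H(z)=e^{-2(n-1)i\arg h(z)}$ and $n-1$ is even, the factor $e^{-2(n-1)\pi i}=1$ absorbs the discrepancy. Hence $r_H(H^{j-1}(z))$ converges to the single limit $e^{-2(n-1)i(\phi_0+(2k+1)\pi)/n}$, the $A_j$ converge to the switched-ray M\"obius map $A$ from Lemma \ref{lem:6.1}, and Theorem \ref{sMlMM} applies precisely as in case (iii)(a).

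The only missing ingredient is a switched-ray analogue of Lemma \ref{lem:nuqrray}. The calculation of $\tau(A)$ is identical after replacing $2k\pi$ by $(2k+1)\pi$, because the switched-ray identity $\arg\widetilde{H}(e^{i\phi_0})=\phi_0+\pi$ combined with \eqref{eq:polarH} and $\tan(\phi_0+(2k+1)\pi-\theta)=\tan(\phi_0-\theta)$ reproduces the same equation \eqref{eq:6.2a} that drives the bound; the auxiliary estimate $|F(T)|\leq (K-1)/(2\sqrt{K})$ then forces $\tau(A)\geq 4$. I expect this parity cancellation to be the main conceptual hurdle: without it the two limiting M\"obius maps would genuinely differ, and one would be forced to pass to the grouped product $A\circ A'$ and a two-step refinement of Theorem \ref{sMlMM}, rather than reusing the single-map argument of Lemma \ref{lem:nuqr2}.
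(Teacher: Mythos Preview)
Your proposal is correct and matches the paper's approach: the paper's proof is a brief sketch that defers to Lemma~\ref{lem:nuqr2} and isolates exactly the same parity observation you identify, namely that $r_H(z)=e^{-2(n-1)i\arg h(z)}$ together with $h(-z)=-h(z)$ and $n-1$ even forces $r_H(z)=r_H(-z)$, so the fixed and switched cases produce the same sequence of M\"obius maps $A_j$. Your explicit case split (iii)(a)/(iii)(b) and the remark about the switched-ray analogue of Lemma~\ref{lem:nuqrray} simply make explicit what the paper compresses into one line (the paper already notes, right after Lemma~\ref{lem:nuqrray}, that the switched case follows by replacing $2k$ with $2k+1$).
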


\begin{proof}
In this case, there are two opposite rays $R_0,R_1$ arising from the Denjoy-Wolff point $z_0$ of $B$ on $\partial \D$ that are either both fixed or both switched by $H$. There is an associated basin of attraction $\Lambda$ that is dense in $\C$. The proof is similar to Lemma \ref{lem:nuqr2} and so we omit the details. The only modification needed is to take into account the fixing or switching of $H$ on $\Lambda$. Since $r_H(z) = e^{-2(n-1)i\arg h(z)}$ it follows that $r_H(z) = r_H(-z)$ and so the proof for both of the cases is the same.

\end{proof}

The previous lemmas complete the proof of Theorem \ref{s2t3}.

\section{Local behaviour near fixed points}

\subsection{B\"ottcher coordinates}

With the results of the previous sections in hand, we can apply them to quasiregular mappings for which the complex dilatation is constant in the neighbourhood of a fixed point. To do this, we need to make use of a B\"ottcher coordinate for such a situation. Such coordinates were constructed when the fixed point has local index $2$ in \cite{FF1}. The method employed there works for any local degree.

\begin{theorem}
\label{thm:bottcher}
Let $U\subset \C$ be a domain, $f:U \to \C$ be quasiregular and $z_0 \in U$ be a fixed point of $f$ with local index $i(z_0,f) = n \geq 2$. Further suppose that there is a neighbourhood $U_1$ of $z_0$ on which $f$ has constant complex dilatation. Then there exists a domain $V \subset U_1$, $K\geq 1$, $\theta \in (-\pi/2, \pi/2]$ and a quasiconformal mapping $\psi :V \to \C$ such that
\[ \psi(f(z)) = H(\psi(z)) \quad z\in V,\]
where $H$ is given by \eqref{eq:H} with $K,\theta,n$ as above. Moreover, $\psi$ is asymptotically conformal as $z\to z_0$.
\end{theorem}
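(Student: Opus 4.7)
After translating, assume $z_0 = 0$. Let $\mu$ denote the constant value of $\mu_f$ on $U_1$, and choose $K \geq 1$ and $\theta \in (-\pi/2, \pi/2]$ so that $\mu = e^{2i\theta}(K-1)/(K+1)$; write $h := h_{K,\theta}$. Since $f$ and $h$ share the same constant complex dilatation on $U_1$, the composition $\phi := f \circ h^{-1}$ has zero complex dilatation on $h(U_1)$ and is therefore holomorphic, giving the local Stoilow-type decomposition $f = \phi \circ h$. Because $h$ is a homeomorphism and $i(0,f) = n$, $\phi$ has local degree $n$ at $0$, so $\phi(w) = a w^n + O(w^{n+1})$ for some $a \neq 0$. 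Note that $H = P \circ h$ with $P(w) = w^n$, so $f$ and $H$ share the qc factor $h$ and differ only in their holomorphic Stoilow factors $\phi$ versus $P$. The classical B\"ottcher theorem applied to $\phi$ at its super-attracting fixed point $0$ yields a conformal germ $\varphi$ at $0$ with $\varphi(0)=0$, $\varphi'(0) \neq 0$ and $\varphi \circ \phi = P \circ \varphi$, which conjugates $\phi$ to $P$ in the $w = h(z)$ coordinate.

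To upgrade $\varphi$ to an exact quasiconformal conjugacy $\psi$ of $f$ to $H$ in the $z$-plane, I would use a B\"ottcher-type iterative limit; the naive candidate $h^{-1} \circ \varphi \circ h$ is only an asymptotic conjugacy because $\varphi$ and $h$ do not commute in general. Shrink to a neighborhood $V$ of $0$ on which both $f$ and $H$ are $n$-to-one with $0$ the unique preimage of $0$. Because $f$ and $H$ both have local degree $n$ at $0$, as $z$ traverses a small loop around $0$ the iterate $f^k(z)$ winds $n^k$ times around $0$, which exactly matches the $n^k$-fold multi-valuedness of $H^{-k}$; consequently $H^{-k} \circ f^k$ admits a single-valued continuous branch fixing $0$. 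Set $\psi_0 := \operatorname{id}$ and iterate
\[
\psi_{k+1}(z) := H^{-1}(\psi_k(f(z))),
\]
so by induction $\psi_k = H^{-k} \circ f^k$ with these consistent branches; each $\psi_k$ is quasiconformal on $V$ with $\psi_k(0) = 0$.

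A telescoping convergence argument modelled on the classical B\"ottcher proof, using the super-attracting estimate $|f^k(z)| = O(|z|^{n^k})$ and the matching expansion of $H^{-k}$, shows $\psi_k$ converges uniformly on a small enough neighborhood of $0$ to a quasiconformal limit $\psi$. Passing to the limit in the recursion yields $\psi = H^{-1} \circ \psi \circ f$, i.e.\ $\psi(f(z)) = H(\psi(z))$ on $V$. Asymptotic conformality at $z_0$ then follows because, in the $w = h(z)$ coordinate, the iteration is essentially the classical B\"ottcher iteration for $\phi$ (whose limit $\varphi$ is conformal), and tracking the Beltrami coefficient through this reduction forces $\mu_\psi(z) \to 0$ as $z \to z_0$. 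The main obstacle is establishing (a) uniform qc distortion bounds on the $\psi_k$ so that the uniform limit is qc, and (b) the convergence of $\psi_k$; both follow by adapting the classical B\"ottcher telescoping product estimate, with the $\R$-linear factor $h$ contributing bounded additional distortion but no new dilatation growth per step. The case $n = 2$ was carried out in \cite{FF1}, and the same argument extends to general $n \geq 2$.
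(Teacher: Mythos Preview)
Your proposal is correct and follows essentially the same strategy as the paper: both use a Stoilow-type factorization $f = (\text{holomorphic}) \circ h_{K,\theta}$, define the conjugacy as the limit of the iterates $\psi_{k+1} = H^{-1}\circ \psi_k \circ f$ (equivalently $\psi_k = H^{-k}\circ f^k$ with consistent branches), and defer the detailed convergence and asymptotic-conformality estimates to the degree-$2$ argument in \cite{FF1}. The only notable difference is that the paper carries out the iteration in logarithmic coordinates $w = \log(z - z_0)$, which turns the $n$-th power into multiplication by $n$ and makes the telescoping estimate $\phi_k(w) = w + o(1)$ cleaner to state; your direct plane-based sketch is the same construction without this bookkeeping device.
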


The details are rather involved, but follow the same theme (with minor changes) as the proof of the degree $2$ case in \cite{FF1}. For the convenience of the reader, we will sketch a proof.

\begin{proof}[Sketch proof of Theorem \ref{thm:bottcher}]

Let $f$ satisfy the hypotheses of the theorem. Then there exists a Stoilow decomposition of $f$ as $f=f_1\circ f_2$, where both $f_1,f_2$ fix $z_0$, $f_2$ is quasiconformal with constant complex dilatation $\mu$ in a neighbourhood of $z_0$ and we can arrange it so that $f_1$ is holomorphic with Taylor series
$f_1(z) = z_0 + (z-z_0)^n + \ldots$ where $n\geq 1$.

We can use logarithmic coordinates in a neighbourhood of $z_0$. Namely, given a function $g$ which fixes $z_0$ and is suitably well-behaved in a neighbourhood of $z_0$, we define its logarithmic transform (see for example \cite[p. 91]{Milnor}) by
\[ \widetilde{g}(w) = \log [g(z_0 + e^w)-z_0],\]
for $\Re(w) < \sigma$. The logarithmic trasform is only defined up to integer multiples of $2\pi i$, and satisfies $\widetilde{g_1\circ g_2} = \widetilde{g_1} \circ \widetilde{g_2}$.

If $P(z) = z^n$, then $\widetilde{P}(w) = nw$ and it is not hard to see that with $f_1$ as above,
\[\widetilde{f_1}(w) = nw + E(w), \]
where $|E(w)| = o(\Re(w))$ as $\Re(w) \to -\infty$.
By \cite[Lemma 3.12]{FF1}, 
\[ \widetilde{h_{K,\theta}}(w) = w +O(1),\]
where the bounded function depends only on the imaginary part of $w$. We also  $\widetilde{h_{K,\theta}^{-1}}(w) = w+O(1)$.

We now define a sequence $\phi_k$ of functions in $\Re(w) < \sigma$ as follows. Let
\[ \phi_1(z) =     \widetilde{h_{K,\theta}^{-1}} \left (  \frac{\widetilde{f_1}(\widetilde{f_2}(z))}{n}      \right ) .\]
This is the logarithmic transform of a suitably chosen branch of $H^{-1}\circ f$, where $H(z) = H_{K,\theta,n}(z-z_0)+z_0$. Undoing the logarithmic transform, we obtain a function $\psi_1$ defined in a neighbourhood of $z_0$ whose logarithmic transform is $\phi_1$.
For $k\geq 1$, define
\[ \phi_{k+1} (w) = \widetilde{h_{K,\theta}^{-1}} \left ( \frac{\phi_k( \widetilde{f_1}(\widetilde{f_2}(w) ) )  }{n} \right).\]
This is the logarithmic transform of a suitably chosen branch of $H^{-1} \circ \psi_k \circ f$, for some mapping $\psi_k$ whose logarithmic transform is $\phi_k$.

Arguing as in \cite{FF1}, it can be shown that
\[ \phi_k(w) = w + o(1), \text { as } \Re(w) \to -\infty,\]
in such a way that $\phi_k$ is asymptotically conformal as $\Re(w) \to -\infty$. It follows that $\psi_k$ is asymptotically conformal as $|z-z_0| \to 0$. Using the normality of uniformly bounded families of $K$-quasiconformal mappings, we obtain a quasiconformal limit $\psi$ of $\psi_k$ which is asymptotically conformal and conjugates $f$ to $H$. This mapping $\psi$ is the required B\"ottcher coordinate.
\end{proof}

\subsection{Fixed external rays}

We can use this B\"ottcher coordinate to describe the local dynamics near a fixed point where the complex dilatation is constant.

\begin{definition}
\label{def:rays}
With $f$ as in the hypotheses of Theorem \ref{thm:bottcher}, define the external ray $E_{\phi}$ of $f$ with angle $\phi \in [0,2\pi)$ as the image of the ray $R_{\phi} = \{ z : \arg(z-z_0) = \phi \}$  under the B\"ottcher coordinate $\psi$ from Theorem \ref{thm:bottcher}.
\end{definition}

The external ray is only initially defined in a neighbourhood of $z_0$ but can be continued to the immediate attracting basin of $z_0$.
We have described rays fixed by $H$ as repelling, attracting or neutral depending on whether the corresponding fixed points of the associated Blaschke product are repelling, attracting or neutral. Similarly, we may describe external rays fixed by $f$ as such. This allows us to describe curves fixed by $f$ which land at $z_0$.

\begin{corollary}
\label{cor:rays}
Let $f$ be as in the hypotheses of Theorem \ref{thm:bottcher} with constant complex dilatation $\mu = e^{2i\theta} \left ( \frac{K-1}{K+1} \right )$ and local index $n$. 
\begin{enumerate}[(a)]
\item If $n$ is even, then
\begin{enumerate}[(i)]
\item if $H$ is hyperbolic, $H$ has $n+1$ fixed external rays, one of which is attracting and the rest of which are repelling;
\item if $H$ is elliptic, then $H$ has $n-1$ fixed external rays, each of which are repelling;
\item if $H$ is parabolic, then $H$ has at most $n$ fixed external rays, one of which is neutral and the rest of which are repelling.
\end{enumerate}
\item If $n$ is odd, 
\begin{enumerate}[(i)]
\item if $H$ is hyperbolic, $H$ has $n+1$ pairs of fixed rays which are either fixed or switched;
\item if $H$ is elliptic, then $H$ has $n-1$ pairs of fixed rays which are either fixed or switched;
\item if $H$ is parabolic, then $H$ has at most $n$ pairs of fixed rays which are either fixed or switched.
\end{enumerate}
\end{enumerate}
If $-\mu \in \mathcal{M}_n$ then pre-images of any fixed external ray are dense in a neighbourhood of $z_0$. If $-\mu \notin \mathcal{M}_n$, then there is a basin of attraction corresponding to either one fixed external ray, a pair of fixed external rays or a pair of switched external rays. This basin of attraction is dense in the neighbourhood of $z_0$.
\end{corollary}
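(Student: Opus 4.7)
The plan is to use the B\"ottcher coordinate from Theorem \ref{thm:bottcher} as a black box that reduces the local study of $f$ near $z_0$ to the global study of $H=H_{K,\theta,n}$ carried out in the preceding sections. Let $\psi:V\to\C$ be the quasiconformal mapping provided by Theorem \ref{thm:bottcher}, so that $\psi\circ f=H\circ\psi$ on $V$ and $\psi(z_0)=0$. By Definition \ref{def:rays}, the external ray $E_\phi$ is $\psi^{-1}(R_\phi\cap\psi(V))$, and so the conjugation immediately gives a bijective correspondence between fixed (respectively switched pairs of) rays of $H$ and fixed (respectively switched pairs of) external rays of $f$. Because $\psi$ is asymptotically conformal at $z_0$, the local dynamical character (attracting, repelling, neutral) of a fixed external ray of $f$ is inherited from that of the corresponding fixed ray of $H$.

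Granted this correspondence, parts (a) and (b) are obtained by reading off Theorems \ref{thm:H1} and \ref{thm:H2}. The ray counts for even and odd $n$ come directly from those theorems. For the attracting/repelling/neutral labelling, I would argue through the associated Blaschke product $B$ from \eqref{eq:B}: in the elliptic case every fixed point of $B$ on $\partial\D$ is repelling (since the unique attracting fixed point lies in $\D$ by the Schwarz-Pick lemma), which via Lemmas \ref{lem:fixpts1} and \ref{lem:fixpts2} forces every fixed or switched ray of $H$ to be repelling. In the hyperbolic case, the Denjoy-Wolff point of $B$ on $\partial\D$ has $B'(z_0)<1$ and is thus attracting, producing the attracting fixed ray (even $n$) or the attracting pair of fixed/switched rays (odd $n$) arising as in Theorem \ref{thm:H2}; the remaining fixed points of $B$ on $\partial\D$ must be repelling, yielding the claimed repelling rays. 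The parabolic case is handled identically, using $B'(z_0)=1$ to produce a single neutral ray (or neutral pair), with the inequality $\le n$ on the count coming from the fact, noted in the example after Theorem \ref{thm:H2}, that parabolic $B$ may have fewer than the maximal number of fixed points on $\partial\D$.

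The final density assertion is simply Theorem \ref{thm:basin} transported through $\psi^{-1}$. If $-\mu\in\mathcal{M}_n$, Theorem \ref{thm:basin} says $\{H^{-k}(R_\phi)\}$ is dense in $\C$ for any fixed ray $R_\phi$; intersecting with $\psi(V)$ and pulling back by $\psi^{-1}$ gives density of the backward orbit of $E_\phi$ in the neighbourhood $V$ of $z_0$. If $-\mu\notin\mathcal{M}_n$, the basin $\Lambda$ associated to the Denjoy-Wolff ray(s) of $H$ is dense in $\C$, and its preimage under $\psi$ provides the claimed dense basin of attraction for $f$ in $V$ corresponding either to a single fixed external ray, a fixed pair, or a switched pair, according to which case of Theorems \ref{thm:H1} and \ref{thm:H2} applies.

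The one point requiring care, and the main obstacle to making this rigorous, is the local-versus-global nature of $\psi$: $\psi$ is defined only on $V$, whereas the dynamical statements for $H$ are global on $\C$. To transfer density, I would observe that $\psi(V)$ is an open neighbourhood of $0$, so density in $\C$ implies density in $\psi(V)$, and then apply $\psi^{-1}$. To transfer the local attracting/repelling character of an individual ray, I would use that a point on a fixed ray $R_\phi\cap\psi(V)$ sufficiently close to $0$ stays inside $\psi(V)$ under forward iteration if $R_\phi$ is attracting or neutral (by invariance of the immediate basin), and that repelling orbits can be tracked backward using $\psi$ and forward using $f$ on a suitably small neighbourhood where $f(V)\supset V$ fails only for finitely many iterates needed to witness the expansion. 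All remaining claims are routine consequences of the conjugation and the previously proved results.
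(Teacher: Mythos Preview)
Your proposal is correct and follows the same approach as the paper: transfer the classification of fixed and switched rays of $H$ (Theorems \ref{thm:H1}, \ref{thm:H2}) and the density statement (Theorem \ref{thm:basin}) through the B\"ottcher coordinate $\psi$ of Theorem \ref{thm:bottcher}. The paper's own proof is a single sentence to this effect; you have supplied considerably more detail, including the justification of the attracting/repelling/neutral labels via the Denjoy--Wolff classification of $B$ and a careful discussion of the local-versus-global issue for $\psi$, both of which the paper leaves implicit (indeed, the paper simply \emph{defines} the character of an external ray of $f$ to be that of the corresponding ray of $H$, so your appeal to asymptotic conformality is not strictly needed).
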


\begin{proof}
This follows by classifying the fixed and switched rays of $H(z) = [ h_{K,\theta}(z)]^n$ and then mapping them to the fixed external rays of $f$ by applying the appropriate B\"ottcher coordinate.
\end{proof}

The results of this paper may be strengthened if a more general B\"ottcher coordinate could be constructed, allowing the complex dilatation to vary in a neighbourhood of $z_0$. One would expect the complex dilatation would have to converge to some $\mu \in \D$ in a suitable sense near $z_0$ to be able to obtain such a result. See a result of Jiang \cite{Jiang} for the case where $f$ is itself asymptotically conformal in a neighbourhood of the fixed point.

\end{document}